\DeclareMathOperator{\Hom}{Hom}
\DeclareMathOperator{\End}{End}
\DeclareMathOperator{\modf}{mod}
\DeclareMathOperator{\add}{add}
\DeclareMathOperator{\Ob}{Ob}
\DeclareMathOperator{\Ker}{Ker}
\DeclareMathOperator{\im}{Im}
\DeclareMathOperator{\rad}{Rad}
\DeclareMathOperator{\ind}{Ind}
\DeclareMathOperator{\proj}{proj}
\newcommand{\op}{{^\textit{op}}}
\newcommand{\sm}[1]{\left(\begin{smallmatrix}#1\end{smallmatrix}\right)}
\newcommand{\bm}[1]{\begin{pmatrix}#1\end{pmatrix}}
\newcommand{\HomT}[1]{\Hom_{\mathcal{T}}(T,#1)}
\newtheorem{definition}{Definition}
\newtheorem{theorem}[definition]{Theorem}
\newtheorem{lemma}[definition]{Lemma}
\theoremstyle{definition}
\newtheorem{example}[definition]{Example}
\newtheorem*{setup}{Setup}
\begin{document}
\title[Abelian quotients of triangulated categories]{Abelian quotients of triangulated categories}
\author{Benedikte Grimeland \and Karin Marie Jacobsen}
\begin{abstract}
We study abelian quotient categories \(\mathcal{A}=\mathcal{T}/\mathcal{J}\), where \(\mathcal{T}\) is a triangulated category and \(\mathcal J\) is an ideal of \(\mathcal{T}\).
Under the assumption that the quotient functor is cohomological we show that it is representable and give an explicit description of the functor. 
We give technical criteria for when a representable functor is a quotient functor, and a criterion for when \(\mathcal{J}\) gives rise to a cluster-tilting subcategory of \(\mathcal{T}\). 
We show that the quotient functor preserves the AR-structure.
As an application we show that if \(\mathcal{T}\) is a finite 2-Calabi-Yau category, then with very few exceptions \(\mathcal J\) is a cluster-tilting subcategory of \(\mathcal{T}\).

\end{abstract}
\maketitle

\section{introduction}

In the literature there are several known methods for forming a triangulated category given an abelian category. Given an abelian category \(\mathcal A\) one can form the homotopy category \(\mathcal{K}(\mathcal A)\) and the derived category \(\mathcal{D}(\mathcal A)\), both of which are triangulated, along with their bounded versions.
Orbit categories \(\mathcal{D}^{b}(\mathcal A)/F\) are known \cite{keller} to be triangulated when \(\mathcal A\) is hereditary and \(F\) is a suitable autoequivalence.
The stable module category of a selfinjective algebra is also triangulated. 

With the introduction of cluster algebras \cite{FZ} and cluster-tilting theory \cite{bmrrt}, cluster-tilting subcategories (or maximal 1-orthogonal subcategories) have been defined, see \cite{iyama}. In \cite{k-zhu}, Koenig and Zhu show that the quotient of any triangulated category by a cluster-tilting subcategory is abelian. However not all triangulated categories contain a cluster-tilting subcategory, but they may still admit an abelian quotient (for an example, see \cite{k-zhu}).
It is also known that for the cluster categories of coherent sheaves on weighted projective lines it is possible to obtain an abelian quotient by factoring out morphisms, without any objects being sent to zero \cite{sheaves} . 

Consider the orbit category \(\mathcal{D}^b(kQ)/\Sigma\), where \(Q\) is a Dynkin diagram and \(\Sigma\) is the suspension functor. This category has the same (finite) number of isomorphism classes of indecomposable objects as \(\modf kQ\), but has a greater number of irreducible morphisms. This motivates us to find out if we can factor out an ideal to obtain an abelian category, possibly without sending any non-zero objects to zero.
Both of the examples mentioned will be revisited in detail in section 4.

Factoring out an ideal from the cluster category of a hereditary algebra has been studied \cite{bmr}. All known abelian quotients of these cluster categories arise from factoring out cluster-tilting subcategories. We show that in the finite case these are in fact all possible abelian quotient categories.

In section 2 we define some notation and show that in the finite case, if an abelian quotient category exists, it has enough projectives.

In section 3 we study a quotient functor from a triangulated category to an abelian category with projective generator. We show that it is representable and naturally equivalent to an explicitly described functor.

Section 4 contains the main result:

\begin{theorem}
\(\HomT{-}\) is a quotient functor from a triangulated category \(\mathcal T\) if and only if the following two criteria are satisfied
\begin{description}
\item[a] For all right minimal morphisms \(T_1\rightarrow T_0\), where \(T_0, T_1\in\add T\), all triangles \(T_1\rightarrow T_0\rightarrow X\xrightarrow h \Sigma T_1\) satisfy \(\HomT h=0\).
\item[b] For all indecomposable \(T\)-supported objects \(X\) there exists a triangle \(T_1\rightarrow T_0\rightarrow X\xrightarrow h \Sigma T_1\) with \(T_1,T_0\in \add T\) and \(\HomT h=0\).
\end{description}
\end{theorem}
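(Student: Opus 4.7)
\emph{Plan.} The proof splits into two directions; sufficiency is the substantive half.

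\emph{Necessity.} Suppose $F := \HomT{-}$ is a quotient functor onto an abelian category $\mathcal{A}$. By the results of Section~3, $FT$ is a projective generator of $\mathcal{A}$, $F$ restricts to an equivalence $\add T \simeq \proj \mathcal{A}$, and $F$ is both dense and cohomological. For (b), an indecomposable $T$-supported $X$ has $FX \neq 0$, so a minimal projective presentation $FT_1 \to FT_0 \to FX \to 0$ lifts via the equivalence and fullness to a triangle $T_1 \to T_0 \to X \xrightarrow{h} \Sigma T_1$; the cohomological long exact sequence combined with surjectivity of $FT_0 \to FX$ then forces $Fh = 0$. For (a), right minimality of $f: T_1 \to T_0$ in $\add T$ transfers to right minimality of $Ff$ in $\proj \mathcal{A}$, making $Ff$ a minimal projective presentation of $\coker(Ff) = FZ$ (using density). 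Lifting the epimorphism $FT_0 \twoheadrightarrow FZ$ and completing yields a triangle whose connecting morphism is killed by $F$; the uniqueness up to isomorphism of triangle completions of $f$ then transfers $Fh = 0$ to the given triangle.

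\emph{Sufficiency.} Assume (a) and (b). I aim to show that $F$ induces an equivalence $\mathcal{T}/\Ker F \xrightarrow{\sim} \modf \End_{\mathcal{T}}(T)\op$, which is abelian with projective generator $FT$. Density is where the criteria do the work: a finitely presented module $M$ has a minimal projective presentation $P_1 \to P_0 \twoheadrightarrow M$, which lifts via $\add T \simeq \proj$ to a right minimal $f: T_1 \to T_0$ in $\mathcal{T}$; completing to a triangle $T_1 \to T_0 \to X \xrightarrow{h} \Sigma T_1$, criterion (a) yields $Fh = 0$ and hence $FX \cong M$ by the long exact sequence. Conversely, (b) ensures every $T$-supported object of $\mathcal{T}$ fits into such a triangle, so the essential image exhausts the target.

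For fullness, a morphism $\mu: FX \to FY$ extends to a morphism between the chosen minimal projective presentations (by projectivity of $FT_0$), which via $\add T \simeq \proj$ and the triangle morphism axiom (TR3) lifts to a morphism of the corresponding triangles in $\mathcal{T}$; the middle component realizes $\mu$ on applying $F$. Faithfulness on $\mathcal{T}/\Ker F$ is tautological. The principal obstacle lies in coordinating these lifts: the triangles supplied by (a) and (b) must assemble into a morphism $X \to Y$ well-defined modulo $\Ker F$, which requires careful use of the octahedral axiom to reconcile different choices of lift and presentation, and verification that any two lifts differ only by an element of $\Ker F$.
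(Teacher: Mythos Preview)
Your sufficiency direction is essentially the paper's argument: (a) gives density via cokernels of lifted minimal presentations, and (b) gives fullness via the comparison theorem for projective resolutions plus TR3. Your anxiety about the octahedral axiom and ``coordinating lifts'' is misplaced, though. Fullness only requires producing \emph{one} preimage of a given $\mu:FX\to FY$; there is no well-definedness issue to resolve, and the paper completes this step without octahedra. So that paragraph of your plan is solving a problem that does not exist.

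The real gap is in your necessity direction, which is where the paper does most of the work.

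For (b), you write that the minimal projective presentation ``lifts via the equivalence and fullness to a triangle $T_1\to T_0\to X\xrightarrow{h}\Sigma T_1$.'' But lifting $Ff$ through $\add T\simeq\proj\Gamma$ only gives you $f:T_1\to T_0$, and completing $f$ to a triangle yields a cone $Y$, not $X$. There is no a priori reason $Y\cong X$; all you know is $FY\cong FX$ (once you have (a)). The paper handles this by lifting the isomorphism to maps $x:X\to Y$, $y:Y\to X$ with $\overline{yx}=1$, invoking locality of $\End(X)$ to get that $X$ is a genuine direct summand of $Y$, and then arguing (via a split-monomorphism argument on the connecting map and right minimality of $\Sigma f$) that the complementary summand $R$ embeds in $\Sigma T_1$ in such a way that $T_1\to T_0\to X\to\Sigma T_1$ is itself distinguished. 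None of this is automatic.

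For (a), your argument does not connect up. You lift the cokernel epimorphism $FT_0\twoheadrightarrow FZ$ to $g:T_0\to Z$ and complete $g$ to a triangle $W\to T_0\xrightarrow{g}Z\to\Sigma W$; surjectivity of $Fg$ indeed kills the connecting map under $F$. But this triangle is a completion of $g$, not of $f$, so ``uniqueness of triangle completions of $f$'' does not apply: you have no control over $W$ or over the map $W\to T_0$. The paper's route is entirely different and substantially longer: it factors $\overline g$ through its image $\HomT Y$, shows $\HomT Y$ is a summand of $\HomT X$, promotes this to $Y$ being a summand of $X$ in $\mathcal T$ via a left-minimality argument on $u:T_0\to Y$, splits off the complement $R$ inside $\Sigma T_1$, and finally uses right minimality of $\Sigma f$ to force $R$ to be non-$T$-supported, whence $\overline h=0$. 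Your sketch contains none of these ingredients.
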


In section 5 we show that if it exists, the AR-structure is preserved by the quotient functor.

In section 6 we discuss the special case of triangulated categories with Calabi-Yau dimension 2. We show
\begin{theorem}
Let \(\mathcal T\) be a 2-CY connected triangulated category with finitely many isomorphism classes of indecomposable objects. If \(T\) is an object in \(\mathcal T\) such that \(\HomT -:\mathcal T\rightarrow \modf\Gamma\) is full and dense, then \(T\) is either Schurian or \(T\) a 2-cluster-tilting object in \(\mathcal T\).
\end{theorem}

\section{Background}
\begin{setup}
\(k\) is a field and \(\mathcal{T}\) is a Hom-finite Krull-Schmidt triangulated \(k\)-category. \(\Sigma\) is the suspension functor of \(\mathcal T\).

By \(\mathcal{J}\) we denote an ideal of \(\mathcal{T}\). The quotient category \(\mathcal{T}/\mathcal{J}\) has the same objects as \(\mathcal{T}\), and has morphisms \(\Hom_{\mathcal{T}/\mathcal{J}}(X,Y)=\Hom_{\mathcal{T}}(X,Y)/\mathcal{J}(X,Y)\). 

By construction the projection functor \(\pi:\mathcal{T}\rightarrow \mathcal{T}/\mathcal{J}\) is full and dense. We also assume it to be cohomological.
\end{setup}

Note that the properties of being Hom-finite and a \(k\)-category carries over from \(\mathcal{T}\) to the quotient \(\mathcal{T}/\mathcal{J}\). 
The property of being a Krull-Schmidt category is also inherited by the quotient category. The proof is a slightly adapted version of the proof found in \cite{mi}, taking into account we do not assume that the ideal \(\mathcal J\) always contains objects. 

\begin{lemma}
Let \(\mathcal T\) be a triangulated Krull-Schmidt \(k\)-category, and let \(\mathcal J\) be an ideal in \(\mathcal T\). Then the quotient category \(\mathcal T/\mathcal J\) is also a Krull-Schmidt category. 
\end{lemma}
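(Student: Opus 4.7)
The plan is to transport the Krull--Schmidt decomposition of objects from $\mathcal T$ across $\pi$ and to discard the summands that the ideal sends to zero. Additivity is immediate: direct sums in $\mathcal T$ remain direct sums in $\mathcal T/\mathcal J$ because $\pi$ is additive. Given $X\in\mathcal T$, I would start from a decomposition $X=X_1\oplus\cdots\oplus X_n$ in $\mathcal T$ with each $X_i$ indecomposable, so that each $\End_{\mathcal T}(X_i)$ is local.

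The key observation is that
$$\End_{\mathcal T/\mathcal J}(X_i)=\End_{\mathcal T}(X_i)/\mathcal J(X_i,X_i)$$
is a quotient of a local ring, and every proper ideal of a local ring is contained in its unique maximal ideal. Hence this quotient is either itself local or is zero. In the second case $1_{X_i}\in\mathcal J(X_i,X_i)$, so $X_i\cong 0$ in $\mathcal T/\mathcal J$ and the summand can be discarded. What remains is a finite direct sum of objects with local endomorphism rings; such summands are indecomposable, and uniqueness up to permutation and isomorphism follows from Azumaya's theorem.

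Idempotent splitting in $\mathcal T/\mathcal J$ then follows from the fact that $\End_{\mathcal T/\mathcal J}(X)$, realized through the above decomposition as a generalized matrix ring over local rings, is semiperfect: any idempotent decomposes into orthogonal primitive idempotents, each of which is conjugate to a canonical diagonal projection $1_{X_i}$ and therefore splits, with image a direct sum of the corresponding $X_i$'s.

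The single novelty relative to the argument for object ideals given in \cite{mi} is the dichotomy ``local or zero'' for $\End_{\mathcal T/\mathcal J}(X_i)$, which allows an indecomposable of $\mathcal T$ to become zero in $\mathcal T/\mathcal J$ without belonging to an object ideal of $\mathcal T$. I expect this dichotomy and the accompanying bookkeeping of discarding collapsed summands to be the only delicate part; everything else is a direct transfer of the classical argument.
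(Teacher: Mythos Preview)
Your proposal is correct and follows essentially the same approach as the paper: both arguments hinge on the observation that $\End_{\mathcal T/\mathcal J}(X_i)=\End_{\mathcal T}(X_i)/\mathcal J(X_i,X_i)$ is a quotient of a local ring and is therefore local or zero, with the zero case corresponding to summands that vanish in the quotient. The paper phrases this by starting from an indecomposable $\overline X$ in $\mathcal T/\mathcal J$, identifying it with a single surviving summand $\overline{X_1}$, and concluding that $\End_{\mathcal T/\mathcal J}(\overline X)\cong\End_{\mathcal T}(X_1)/\mathcal J(X_1,X_1)$ is local; you phrase it by decomposing an arbitrary object and discarding collapsed summands, which is the same idea viewed from the other end.
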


\begin{proof}
Let \(\overline X\) be an indecomposable non-zero object of \(\mathcal T/\mathcal J\). Then the preimage \(X\) in \(\mathcal T\) can be decomposed into a finite direct sum of indecomposable objects: \(X=\oplus_{i=1}^nX_i\). Let \(e_i: X\xrightarrow{\rho_i}X_i\xrightarrow{\iota_i}X\) be the canonical morphisms in \(\mathcal T\) for \(i\in\left\{1,\ldots,n\right\}\), and denote by \(\overline{e_i}\) the image of \(e_i\) in \(\mathcal T/\mathcal J\). 

Since \(\overline X\) is indecomposable, all except one of the \(e_i\) has to be such that \(\overline{e_i}=0\). Therefore we may assume that \(\overline{e_1}\neq0\) and \(\overline{e_i}=0\) for \(i\in\left\{2,\ldots,n\right\}\). Note that \(\overline{\rho_i}\) is an epimorphism, since \(\rho_i\circ\iota_i=1_{X_i}\) so that \(\overline{\rho_i}\circ\overline{\iota_i}=\overline{\rho_i\circ\iota_i}=\overline{1_{X_i}}=1_{\overline{X_i}}\). Therefore \(\overline{\iota_i\circ\rho_i}=0\) means that \(\iota_i\in\mathcal J\). However, since \(\iota_i\in\mathcal J\) and \(\overline{\iota_i\circ\rho_i}=0\) this means that we also have \(\rho_i\in\mathcal J\). Then \(1_{\overline{X_i}}=0\) and so \(\overline{X_i}=0\). Looking at the endomorphism ring of \(\overline{X}\) we then have that \[\End_{\mathcal T/\mathcal J}(\overline{X})=\End_{\mathcal T/\mathcal J}(\overline{X_i})\] which is a local ring. 
\end{proof}
For more details we refer the reader to \cite{mi}, section 2 and 3.

Let \(\mathcal{A}\) be an abelian Hom-finite Krull-Schmidt \(k\)-category with finitely many isomorphism classes of indecomposable objects. We call a projective object \(P\) in \(\mathcal{A}\) a projective generator if for any object \(X\) in \(\mathcal{A}\) there is an epimorphism \( P^n\twoheadrightarrow X\) for some \(n\in\mathbb{N}\).

Our first aim is to establish that \(\mathcal{A}\) has a projective generator. We need this when we study abelian quotients \(\mathcal{A}=\mathcal{T}/\mathcal{J}\) in later sections.
The existence of the projective generator was shown for the case when all objects of \(\mathcal A\) has finite length by Deligne in \cite{deligne}. We do not know that we have finite length yet, but we do know that \(\mathcal A\) is a Krull-Schmidt category. Therefore we give a different proof. We will use the Harada-Sai lemma for the proof, so we need to recall the standard definition of length in a category.

\begin{definition}
An object \(X\) in an abelian category \(\mathcal{A}\) has finite length if there exists a finite chain of subobjects
\[0=X_0 \subsetneq X_1 \subsetneq \ldots \subsetneq X_{n-1}\subsetneq X_n=X\]
such that each quotient \(X_{i+1}/X_i\) is a simple object.
\end{definition}

We also define a different measure on the indecomposable objects of \(\mathcal A\). This will help us show that every object in \(A\) has finite length.
By \(\ind \mathcal{A}\) we denote the set isomorphism classes of indecomposable objects in \(\mathcal{A}\).

\begin{definition}Let \(X\) be an indecomposable object in \(\mathcal{A}\). We define \[\hat{l}(X)=\sum_{I\in\ind \mathcal A}\dim_k\Hom_{\mathcal{A}}(I,X).\]
\end{definition}

Since \(\mathcal{A}\) is \(\Hom\)-finite and there are finitely many isomorphism classes of indecomposables, \(\hat{l}(X)\) must be a finite number.

\begin{lemma}
Let \(X\) and \(Y\) be objects in \(\mathcal{A}\). If there exists a proper monomorphism \(i:X\rightarrow Y\) then \(\hat{l}(X)<\hat{l}(Y)\).
\end{lemma}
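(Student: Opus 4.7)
The plan is to compare the two sums term by term and then rule out equality. For every indecomposable object $I$, the functor $\Hom_{\mathcal{A}}(I,-)$ is left exact, so applying it to the monomorphism $i:X\rightarrow Y$ yields an injection $\Hom_{\mathcal{A}}(I,X)\hookrightarrow\Hom_{\mathcal{A}}(I,Y)$. Hence $\dim_k\Hom_{\mathcal{A}}(I,X)\leq\dim_k\Hom_{\mathcal{A}}(I,Y)$ for each isomorphism class $I\in\ind\mathcal{A}$, and summing over $I$ gives $\hat{l}(X)\leq\hat{l}(Y)$.

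The substantive step is the strict inequality. I would argue by contrapositive: assume that $\hat{l}(X)=\hat{l}(Y)$ and show that $i$ is an isomorphism. Equality of the sums, together with the term-by-term inequality already established, forces $\Hom_{\mathcal{A}}(I,X)\rightarrow\Hom_{\mathcal{A}}(I,Y)$ to be a bijection for every indecomposable $I$, i.e.\ every morphism from an indecomposable into $Y$ factors (uniquely) through $i$.

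Now use that $\mathcal{A}$ is Krull--Schmidt: decompose $Y=\bigoplus_{j=1}^m Y_j$ into indecomposables. Since $\Hom_{\mathcal{A}}(Y,-)=\prod_{j}\Hom_{\mathcal{A}}(Y_j,-)$, the induced map $\Hom_{\mathcal{A}}(Y,X)\rightarrow\Hom_{\mathcal{A}}(Y,Y)$ is surjective as well. Lifting $1_Y$ produces $r:Y\rightarrow X$ with $i\circ r=1_Y$, and then $i\circ(r\circ i)=i=i\circ 1_X$; left cancellation along the monomorphism $i$ yields $r\circ i=1_X$. Thus $i$ is an isomorphism, contradicting the assumption that $i$ is proper.

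I expect the only potentially subtle point to be the Krull--Schmidt step: one has to remember that $\ind\mathcal{A}$ in the definition of $\hat{l}$ ranges over all indecomposables of $\mathcal{A}$, so in particular every indecomposable summand $Y_j$ of $Y$ appears, which is precisely what is needed to upgrade the term-by-term surjectivity to surjectivity on $\Hom_{\mathcal{A}}(Y,-)$. No use of Harada--Sai is required here; that lemma will come in only when passing from this numerical bound to the finite length of every object in $\mathcal{A}$.
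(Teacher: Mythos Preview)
Your proof is correct and is essentially the paper's argument run in contrapositive form. The paper likewise uses left exactness of $\Hom_{\mathcal{A}}(I,-)$ for the weak inequality and then observes directly that, since $1_Y$ cannot factor through the proper monomorphism $i$, some indecomposable summand $Y'$ of $Y$ witnesses the strict inequality $\dim_k\Hom_{\mathcal{A}}(Y',X)<\dim_k\Hom_{\mathcal{A}}(Y',Y)$.
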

\begin{proof}
Assume that \(i:X\rightarrow Y\) is a proper monomorphism. For any \(I\), the induced morphism \( \Hom_\mathcal A(I,X)\rightarrow \Hom_\mathcal A(I,Y)\) is an inclusion. Therefore \(\dim\Hom_{\mathcal{A}}(I,X)\leq\dim\Hom_{\mathcal{A}}(I,Y)\) for all \(I\), and \(\hat{l}(X)\leq\hat{l}(Y)\).

The identity \(1_Y\) cannot factor through \(i\), as \(i\) is assumed not to split.
Therefore there is at least one indecomposable summand \(Y'\) of \(Y\) such that \(1_{Y'}:Y'\hookrightarrow Y\) does not factor through \(i:X\rightarrow Y\). Therefore \(\dim\Hom_{\mathcal A}(Y',X)<\dim\Hom_{\mathcal A}(Y',Y)\), and we must have \(\hat{l}(X)<\hat{l}(Y)\).
\end{proof}

\begin{theorem}
Let \(X\) be an object in an abelian Krull-Schmidt \(\Hom\)-finite \(k\)-category with finitely many indecomposable objects. Then \(X\) has finite length.
\end{theorem}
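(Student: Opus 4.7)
The plan is to construct a composition series for $X$ by choosing a strict chain of subobjects of maximal possible length, then invoking the preceding lemma to show this chain cannot be refined.

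First I would verify that $\hat{l}(Y)$ makes sense and is a finite non-negative integer for \emph{every} object $Y$, not merely for indecomposables. Since $\mathcal{A}$ is Krull-Schmidt with only finitely many indecomposable isomorphism classes, $Y$ decomposes as a finite direct sum of indecomposables $Y = \bigoplus_j Y_j$, and additivity of $\Hom_{\mathcal{A}}(I,-)$ yields $\hat{l}(Y) = \sum_j \hat{l}(Y_j) < \infty$. The proof of the preceding lemma never uses indecomposability of the source, so it still applies in this generality: every proper monomorphism $Y' \hookrightarrow Y$ in $\mathcal{A}$ satisfies $\hat{l}(Y') < \hat{l}(Y)$.

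Next, for any strict chain of subobjects $0 = X_0 \subsetneq X_1 \subsetneq \cdots \subsetneq X_n \subseteq X$, the lemma gives
\[\hat{l}(X_0) < \hat{l}(X_1) < \cdots < \hat{l}(X_n) \leq \hat{l}(X),\]
forcing $n \leq \hat{l}(X)$. So the set of lengths of strict chains of subobjects of $X$ is a non-empty bounded subset of $\mathbb{N}$ and therefore attains a maximum $N$. I would fix such a chain $0 = X_0 \subsetneq X_1 \subsetneq \cdots \subsetneq X_N$ and argue it is a composition series for $X$: if $X_N \subsetneq X$ one could extend by $X_{N+1} := X$, contradicting maximality; and if some quotient $X_{i+1}/X_i$ had a proper non-zero subobject $S$, its preimage under the projection $X_{i+1} \twoheadrightarrow X_{i+1}/X_i$ would lie strictly between $X_i$ and $X_{i+1}$, refining the chain and again contradicting maximality. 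Hence $X_N = X$ and each $X_{i+1}/X_i$ is simple.

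I expect the only real subtlety to be the preliminary bookkeeping — checking that $\hat{l}$ extends sensibly to arbitrary objects and that the preceding lemma applies to non-indecomposable subobjects — but both points reduce immediately to the Krull-Schmidt hypothesis. Notably, no Harada-Sai style argument is required once the integer bound from $\hat{l}$ is available; the length of every strict chain of subobjects is controlled directly by $\hat{l}(X)$.
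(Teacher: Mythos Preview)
Your proof is correct and follows essentially the same approach as the paper: both arguments use the strict monotonicity of $\hat{l}$ along proper monomorphisms to bound the length of any strict chain of subobjects by $\hat{l}(X)$, and both use the preimage of a proper subobject of a quotient to refine a non-composition chain. The only cosmetic difference is that you pick a chain of maximal length and argue it cannot be refined, whereas the paper starts from an arbitrary chain and refines iteratively until termination; your explicit remark that $\hat{l}$ and the preceding lemma extend to non-indecomposable objects is a point the paper leaves implicit.
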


\begin{proof}
Consider a finite chain of subobjects of \(X\)
\[0=X_0 \subsetneq X_1 \subsetneq \ldots \subsetneq X_{n-1}\subsetneq X_n=X\]
where not all quotients \(X_{i+1}/X_i\) are necessarily simple objects. Choose a non-simple quotient \(X_{j+1}/X_j\). Let \(Z\) be a nonzero, proper subobject of \(X_{j+1}/X_j\). Consider the following commutative diagram with short exact rows.

\begin{center}
\begin{tikzpicture}
\node(A) at(0,0) {\(X_j\)};
\node(B) at (2,0) {\(X_{j+1}\)};
\node(C) at (4,0) {\(X_{j+1}/X_j\)};
\node(D) at (0,2){\(X_j\)};
\node(E) at (2,2) {\(Y\)};
\node(F) at (4,2) {\(Z\)};
\draw[->] (F)--(C);
\draw[->>](B)--(C);
\draw[right hook->](A)--(B);
\draw[right hook->](E)--(B);
\draw[right hook->](D)--(E);
\draw[->>](E)--(F);
\draw[double equal sign distance](A)--(D);
\end{tikzpicture}
\end{center}

The object \(Y\) is the pullback of \(Z\rightarrow X_{j+1}/X_j\) and \(X_{j+1}\rightarrow X_{j+1}/X_j\).
We see that \(Y\) is such that \(X_j\subseteq Y \subseteq X_{j+1}\). We now need to show the inclusions to be proper inclusions. 

If \(X_j=Y\), then \(Z\rightarrow X_{j+1}/X_j\) is an epimorphism, contradicting the choice of \(Z\). If \(Y=X_{j+1}\), the exact sequence in the top row of the diagram would force \(Z=0\), which again contradicts the choice of \(Z\). 

Hence we find a refined finite chain
\[0=X_0 \subsetneq X_1 \subsetneq \ldots \subsetneq X_j\subsetneq Y\subsetneq X_{j+1}\subsetneq\ldots  \subsetneq X_{n-1}\subsetneq X_n=X.\]
If the quotients of this chain are still not simple, the process can be repeated. However, since \(\hat l(X)\) is finite and \(\hat l(X_i)<\hat l(X_{i+1})\), we can only do a finite number of iterations of the process before reaching a chain where all the quotients are simple. Thus any object has a finite composition series and also finite length.
\end{proof}

Our aim now is to show that there are enough projectives in the category \(\mathcal{A}\). In order to achieve this the following lemma will be useful:

\begin{lemma}(Harada-Sai)

Consider a chain of length \(2^n\) of non-isomorphisms \(f_i\) between indecomposable objects of maximal length \(n\): \[X_1\stackrel{f_1}{\rightarrow} X_2\stackrel{f_2}{\rightarrow}\cdots X_{2^n-1}\stackrel{f_{2^n}}{\rightarrow}X_{2^n}.\] Then the composition \(f_{2^n} f_{2^n-1}\cdots f_2f_1\) is zero.
\end{lemma}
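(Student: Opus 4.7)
The plan is to prove the Harada-Sai lemma by induction on \(n\), the common upper bound on the composition lengths of the indecomposable objects in the chain; finite length has already been established by the preceding theorem, so this induction is well-defined. For the base case \(n=1\), every \(X_i\) is simple, and any morphism between simples is either zero or an isomorphism by Schur's lemma. Since each \(f_i\) is assumed to be a non-isomorphism, each \(f_i=0\) individually, and the composition vanishes.

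For the inductive step, I would assume the result for the length bound \(n-1\) and consider a chain of \(2^n\) non-isomorphisms between indecomposables of length at most \(n\). The key preliminary observation is that for any non-isomorphism \(f\colon X\to Y\) between indecomposables, \(\ell(\im f)<\max\{\ell(X),\ell(Y)\}\): if \(f\) is not injective then \(\im f\cong X/\Ker f\) is a proper quotient of the indecomposable \(X\), while if \(f\) is not surjective then \(\im f\) is a proper subobject of the indecomposable \(Y\) (and proper subobjects of indecomposables of finite length have strictly smaller length). I would then track the sequence of images \(Y_k=\im(f_k\circ\cdots\circ f_1)\subseteq X_k\), which forms a chain of epimorphisms \(Y_{k-1}\twoheadrightarrow Y_k\) with non-increasing lengths in \(\{1,\ldots,n\}\) under the assumption that the full composition is nonzero. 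Splitting the chain at the midpoint and analyzing \(Y_{2^{n-1}}\), I would either find that \(\ell(Y_{2^{n-1}})<n\), allowing a Krull-Schmidt decomposition of \(Y_{2^{n-1}}\) and restriction of the second half of the chain to its indecomposable summands to produce, after suitable factorization, a chain of \(2^{n-1}\) non-isomorphisms between indecomposables of length at most \(n-1\) to which the induction hypothesis applies; or the length must remain equal to \(n\) across the midpoint, in which case the preliminary observation forces too many intermediate maps to be isomorphisms, contradicting the hypothesis on the \(f_i\).

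The main obstacle I anticipate is that the images \(Y_k\) are generally not indecomposable, so the reduction to the induction hypothesis requires care: one must pass through Krull-Schmidt decompositions of the images, identify component morphisms that remain non-isomorphisms between indecomposables, and verify that the length drops are sufficient to land within the scope of the inductive hypothesis. The doubling factor \(2^n\) in the length of the chain is precisely what the midpoint splitting argument together with the pigeonhole structure on the at most \(n-1\) possible strict length drops permits.
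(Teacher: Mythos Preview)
The paper does not actually prove this lemma: it simply records the statement and refers to \cite{Rowen} for a proof valid in abelian categories. So there is no in-paper argument to compare against; what matters is whether your proposal stands on its own.

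It does not, and the gap is in the induction scheme. You induct on the length bound \(n\) and hope that after splitting the chain at the midpoint, the image \(Y_{2^{n-1}}\) having length at most \(n-1\) will let you feed ``the second half of the chain'' into the induction hypothesis. But the induction hypothesis concerns chains of non-isomorphisms between indecomposables of length at most \(n-1\), whereas the objects \(X_{2^{n-1}+1},\ldots,X_{2^n}\) in the second half still have length up to \(n\); passing to indecomposable summands of the \emph{image} does not change that. Tracking the images \(Y_k\) instead gives a chain of epimorphisms between objects of length \(\leq n-1\), but these objects are not indecomposable and the component maps between their Krull--Schmidt summands need not be non-isomorphisms, so again the hypothesis does not apply. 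Your ``other branch'' (\(\ell(Y_{2^{n-1}})=n\)) in fact already fails at the very first step: \(\ell(Y_1)<n\) always, by your own preliminary observation, so the dichotomy at the midpoint carries no force.

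The fix is to strengthen the statement and change the induction variable. Keep the bound \(n\) on the objects fixed and prove, by induction on \(m\), that the image of any composite of \(2^m-1\) non-isomorphisms between indecomposables of length \(\leq n\) has length \(\leq n-m\). The base case \(m=1\) is exactly your preliminary observation. For the step, write the \(2^{m+1}-1\) maps as (first \(2^m-1\)) \(\circ\) (middle map \(f\)) \(\circ\) (last \(2^m-1\)); by induction both outer composites have image of length \(\leq n-m\). If the full image still had length \(n-m\), one shows that the domain and codomain of the middle map each split as an internal direct sum (image piece plus kernel piece), forcing the indecomposable middle objects to coincide with those pieces and hence \(f\) to be an isomorphism, a contradiction. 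Setting \(m=n\) then gives image length \(\leq 0\), i.e.\ the composite is zero. This is the argument found in the cited reference.
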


A proof of this lemma, which is also valid in abelian categories can for example be found in \cite{Rowen}.

For each indecomposable object \(X\) of \(\mathcal{A}\) we will show that there is a projective object with an epimorphism to \(X\) by iterating a certain process, which will build a tree with \(X\) as the root-node. Let \(N\) be the maximal length of any indecomposable object of \(\mathcal{A}\). Then by the Harada-Sai lemma, any chain of \(2^N\) non-isomorphisms in \(\mathcal{A}\) is zero.

\begin{theorem}
An abelian category \(\mathcal A\) with finitely many isomorphism classes of indecomposable objects has enough projectives.
\end{theorem}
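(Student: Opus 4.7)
The plan is to show, for each indecomposable $X\in\mathcal A$, the existence of a projective $P$ and an epimorphism $P\twoheadrightarrow X$; since finite direct sums of projectives are projective and every object decomposes into indecomposables by Krull--Schmidt, this will suffice. I would assemble $P$ from the leaves of a finite rooted tree of indecomposables, built recursively from $X$, using Krull--Schmidt to produce children at each node and the Harada--Sai lemma to bound the depth.

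I would construct the tree as follows. The root is $X$. At a node $Y$, if $Y$ is projective, declare $Y$ a leaf. Otherwise $Y$ is non-projective, so by the standard characterisation of projective objects there exists a non-split epimorphism $e_Y:M\twoheadrightarrow Y$; I would decompose $M=\bigoplus_i Z_i$ into indecomposables via Krull--Schmidt, discard any $Z_i$ with $e_Y|_{Z_i}=0$ (which preserves both epimorphicity and non-splitting), and declare the remaining $Z_i$ the children of $Y$, labelling each edge with the component map $Z_i\to Y$. A first check is that every such edge is a non-isomorphism: a split map from an indecomposable $Z_i$ to an indecomposable $Y$ is forced to be an isomorphism by locality of endomorphism rings, and that would yield a splitting of $e_Y$.

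For the depth bound, let $N$ be the maximal length of an indecomposable of $\mathcal A$ (finite by the preceding theorem together with the hypothesis on $\ind\mathcal A$). The ``path morphism'' from a node at depth $d$ up to $X$ is a composition of $d$ non-isomorphisms between indecomposables of length at most $N$, hence vanishes once $d\geq 2^N$ by Harada--Sai. I would therefore also declare any node whose path morphism is zero to be a leaf. Since each node has finitely many children (by $\Hom$-finiteness and the Krull--Schmidt decomposition of $M$) and the depth is bounded by $2^N$, the tree is finite.

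Finally, I would take $P$ to be the direct sum of the projective leaves and $P\to X$ to be the sum of their path morphisms. Epimorphicity follows by induction on depth: passing from level $d$ to level $d+1$ factors through a direct sum of the $e_Y$ (on non-leaves) and identities (on already-declared leaves), which is itself epi, so the composition $P_{d+1}\to P_d\to X$ is epi whenever $P_d\to X$ is. At the terminal depth every node is a leaf, and the zero-contributing leaves contribute $0$ to the combined map, so its restriction to the projective leaves is still epi onto $X$. The main obstacle I anticipate is this final bookkeeping step of distinguishing projective from zero-contributing leaves and checking that dropping the latter preserves the epimorphism; with that in hand, Harada--Sai finiteness and the stability of projectivity under finite direct sums deliver the desired $P\twoheadrightarrow X$.
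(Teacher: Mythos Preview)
Your proposal is correct and follows essentially the same approach as the paper: build a finite rooted tree of indecomposables under $X$ by repeatedly choosing a non-split epimorphism onto each non-projective node and decomposing the source, invoke Harada--Sai (with the length bound from the preceding theorem) to cap the depth, and take $P$ to be the sum of the projective leaves. The paper's version is slightly terser---it does not discard zero components, it records that each edge lies in the radical rather than explicitly checking non-isomorphism, and at the end it simply asserts that restricting the epimorphism $g$ to the projective leaves remains epi---so the ``bookkeeping step'' you flagged as the main obstacle is precisely the point the paper leaves to the reader; your inductive argument for it is fine.
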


\begin{proof}
Let \(X\) be an indecomposable, non-zero object of \(\mathcal{A}\) which is not projective. Let \(X^+\) be an object in \(\mathcal{A}\) with an epimorphism \(f^+\) to \(X\) which is not split. If \(X^+\) is projective, we are done. If \(X^+\) is not projective, decompose \(X^+\)  into a finite sum of indecomposable objects: \[X^+=\bigoplus_{i=1}^mX_i\] with morphisms \(f_i:X_i\rightarrow X\). Note that each morphism \(f_i\in\rad(X_i,X)\).

Now, consider each summand \(X_i\). If \(X_i\) is projective, we take no further action. Otherwise, we can again find an object \(X_i^+\) of \(\mathcal{A}\) with a non-split epimorphism \(f_i^+:X_i^+\rightarrow X_i\).
The objects \(X_i^+\) can be decomposed into finite sums of indecomposable objects again, and we iterate the process. 

This iteration process builds a directed tree, where each node is an indecomposable object of \(\mathcal{A}\), and each edge is a radical morphisms. We continue the iteration process until all leaf nodes of the tree are either projective indecomposable objects, or at a branch of length \(2^N\). The sum of the compositions of morphisms along all paths from the leaf nodes to the root nodes is an epimorphism by construction. Consider this morphism: 
\[\bigoplus_{\textnormal{leaf nodes}}X_{\mathrm{leaf}}\xrightarrow{g}X.\]
If there are no projective leaf nodes in the tree, g is a composition of \(2^N\) radical morphisms, and thus zero. Since it is an epimorphism, \(X=0\), which is a contradiction of the initial assumptions.

Let \(P\) be the sum of all projective leaf nodes occurring in the tree, we now know \(P\neq 0\). Consider the inclusion \(i\) from \(P\) to the sum of the leaf nodes. It is easy to see that \(g\circ i \) is an epimorphism from a projective object to \(X\).
\end{proof}


\section{The quotient functor is representable}
In the remaining sections we assume that \(\mathcal{T}/\mathcal{J}\) is an abelian category denoted by \(\mathcal A\). As established in the previous section \(\mathcal{A}\) has a projective generator when it is a finite category. For the rest of the article, neither \(\mathcal A\) nor \(\mathcal T\) are required to be finite. However, we require \(\mathcal A\) to have a projective generator \(P\).

In this section we define \(T\) as the minimal preimage of \(P\) in \(\mathcal{T}\). We first show that the functor \(\HomT-\) takes the ideal \(\mathcal J\) to zero in the module category \(\modf \End_\mathcal T(T)^\op\). Second, we show that the \(k\)-algebras \(\Gamma \coloneqq \End_{\mathcal{T}}(T)^{\op}\) and \(\Lambda\coloneqq\End_{\mathcal{A}}(P)^{\op}\) are isomorphic. Finally, the main result of the section is proved, namely that the quotient functor \(\pi\) is naturally isomorphic to \(\HomT-\).

\begin{definition}
Let \(\mathcal T\) be a triangulated category and \(\mathcal J\) an ideal in \(\mathcal T\) such that \(\mathcal A=\mathcal T / \mathcal J\) is an abelian category with a  basic projective generator \(P\). Let \(\pi\) be the quotient functor from \(\mathcal T\) to \(\mathcal A\), and assume \(\pi\) is cohomological. We define the minimal preimage of \(P\) in \(\mathcal T\) to be the basic object \(T\in\mathcal T\) such that:
\begin{itemize}
	\item \(\pi(T)=P\)
	\item for all indecomposable summands \(T'\) of \(T\) we have \(\pi(T')\neq0\).
\end{itemize}
\end{definition}

The following lemma will prove useful in the remaining sections.

\begin{lemma} \label{splitindec}
Let \(f:X\rightarrow Y\) be a morphism in \(\mathcal T\). Then
\begin{enumerate}
\item if \(Y\) is indecomposable and \(\pi(Y)\neq0\), then \(f\) is a split epimorphism if and only if \(\pi(f)\) is a split epimorphism.
\item if \(X\) is indecomposable and \(\pi(X)\neq0\), then \(f\) is a split monomorphism if and only if \(\pi(f)\) is a split monomorphism.
\end{enumerate}
\end{lemma}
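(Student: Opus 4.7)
The forward directions of both parts are immediate: any functor sends split epis to split epis and split monos to split monos, so if $f$ has a one-sided inverse in $\mathcal T$ then $\pi(f)$ has the corresponding one-sided inverse in $\mathcal A$. The content is in the converses, and the plan is to use fullness of $\pi$ to lift a splitting, and then exploit the fact that endomorphism rings of indecomposables in the Hom-finite Krull-Schmidt category $\mathcal T$ are local (in fact finite-dimensional local $k$-algebras, whose Jacobson radical is nilpotent).

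For part (1), suppose $Y$ is indecomposable, $\pi(Y)\neq 0$, and $\pi(f)$ is a split epimorphism. Since $\pi$ is full, I can choose $g\colon Y\to X$ with $\pi(g)$ a section of $\pi(f)$, so that $\pi(fg)=1_{\pi(Y)}$. Now $fg\in\End_{\mathcal T}(Y)$, and $\End_{\mathcal T}(Y)$ is a finite-dimensional local $k$-algebra. Hence $fg$ is either a unit or lies in the Jacobson radical. In the latter case $fg$ would be nilpotent, say $(fg)^n=0$, which after applying $\pi$ gives $1_{\pi(Y)}=\pi(fg)^n=\pi((fg)^n)=0$, contradicting $\pi(Y)\neq 0$. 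Therefore $fg$ is an isomorphism and $f$ is a split epimorphism, as required.

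Part (2) is handled by the dual argument: given a split monomorphism $\pi(f)$, fullness produces $g\colon Y\to X$ with $\pi(gf)=1_{\pi(X)}$, and the same locality/nilpotency dichotomy applied to $gf\in\End_{\mathcal T}(X)$ forces $gf$ to be an isomorphism, so $f$ is a split monomorphism.

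The only subtle point is ensuring that elements of the radical of $\End_{\mathcal T}(Y)$ really are nilpotent; this follows because $\mathcal T$ is Hom-finite, making $\End_{\mathcal T}(Y)$ a finite-dimensional local $k$-algebra, hence artinian with nilpotent radical. No use of the triangulated structure, cohomologicality of $\pi$, or the existence of $T$ is needed here — only that $\pi$ is full, that $\mathcal T$ is Hom-finite Krull-Schmidt, and the nonvanishing hypothesis on $\pi(X)$ or $\pi(Y)$.
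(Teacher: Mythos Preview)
Your proof is correct and follows essentially the same approach as the paper: lift a splitting via fullness, then use that $\End_{\mathcal T}(Y)$ is local to conclude $fg$ is either nilpotent or invertible, ruling out nilpotence because $\pi(fg)=1_{\pi(Y)}$ with $\pi(Y)\neq 0$. You are slightly more explicit than the paper in justifying why non-units are nilpotent (via Hom-finiteness and artinianity of the local endomorphism ring), but the argument is otherwise identical.
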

\begin{proof}
We only prove the first statement, as the second is dual.

If \(f\) is a split epimorphism, then there exists a morphism \(g:Y\rightarrow X\) such that \(fg=1_Y\). But then \(\pi(f)\pi(g)=\pi(1_Y)=1_\pi(Y)\), so \(\pi(f)\) is a split epimorphism. 

If \(\pi(f)\) is a split epimorphism, there exists a morphism \(g':\pi(Y)\rightarrow \pi(X)\) such that \(\pi(f)g'=1_{\pi(Y)}\). Since \(\pi\) is a full functor, there exists a morphism \(g:Y\rightarrow X\) with \(\pi(g)=g'\). Since \(fg\in\End_{\mathcal T}(Y)\), which is a local ring, \(fg\) is either nilpotent or an isomorphism. As \(\pi(fg)=\pi(f)g'=1_{\pi(Y)}\), it clearly cannot be nilpotent. Hence \(fg\) is an isomorphism, and thus \(f\) is split epimorphism.
\end{proof}

For the rest of this section we fix \(P\) as the projective generator of \(\mathcal A\), and we fix \(T\) to be the minimal preimage of \(P\) in \(\mathcal T\). From now on we will denote a morphism \(\HomT f\) by \(\overline f\).

\begin{lemma}\label{idealhom}
Let \(f\in\mathcal J\). Then \(\Hom_{\mathcal T}(T, f)=\bar f=0\).
\end{lemma}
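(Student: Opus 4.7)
The plan is to reduce the statement to showing $\mathcal{J}(T, Y) = 0$ for every $Y \in \mathcal{T}$. This suffices: for any $f \in \mathcal{J}(X, Y)$ and any $g : T \to X$, the composition $fg$ lies in $\mathcal{J}(T, Y) = 0$, so $\overline{f}(g) = fg = 0$ for every $g$, giving $\overline{f} = 0$.

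I would first reduce to the indecomposable case. Decomposing $T = \bigoplus_i T_i$, a morphism $\phi : T \to Y$ lies in $\mathcal{J}$ if and only if each restriction $\phi_i : T_i \to Y$ does, so it is enough to prove $\mathcal{J}(T_i, Y) = 0$ for each $i$. By minimality of $T$ each $\pi T_i \neq 0$; fullness of $\pi$ makes $\End_\mathcal{T}(T_i) \to \End_\mathcal{A}(\pi T_i)$ surjective, and since $\End_\mathcal{T}(T_i)$ is local the target is local too, so $\pi T_i$ is indecomposable and, being a summand of $P$, projective.

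Now fix an indecomposable summand $T'$ of $T$ with $\pi T'$ a nonzero projective, and let $\phi : T' \to Y$ satisfy $\pi \phi = 0$. I would complete $\phi$ to a triangle $\Sigma^{-1}C \xrightarrow{d} T' \xrightarrow{\phi} Y \to C$. Because $\pi$ is cohomological and $\pi \phi = 0$, the induced map $\pi d : \pi(\Sigma^{-1}C) \to \pi T'$ is an epimorphism in $\mathcal{A}$. Projectivity of $\pi T'$ gives a section $s$ of $\pi d$, and fullness of $\pi$ lifts $s$ to some $u : T' \to \Sigma^{-1}C$ with $\pi u = s$, whence $\pi(du) = 1_{\pi T'}$.

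The main obstacle is to upgrade the identity $\pi(du) = 1_{\pi T'}$ to $du$ being an automorphism of $T'$ in $\mathcal{T}$, and not merely in $\mathcal{A}$. For this I would observe that $du - 1_{T'}$ lies in the kernel of the surjection $\End_\mathcal{T}(T') \twoheadrightarrow \End_\mathcal{A}(\pi T')$; this kernel is a proper two-sided ideal of $\End_\mathcal{T}(T')$, because $\pi T' \neq 0$ forbids $1_{T'} \in \mathcal{J}$, and is therefore contained in the unique maximal ideal of the local ring $\End_\mathcal{T}(T')$. Hence $du = 1_{T'} + (du - 1_{T'})$ is a unit, so $d$ admits a section, and composing the triangle identity $\phi d = 0$ with any such section forces $\phi = 0$, as required.
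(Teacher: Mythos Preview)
Your proof is correct and follows essentially the same route as the paper's: both reduce to a morphism out of an indecomposable summand \(T'\) of \(T\), complete it to a triangle, use that \(\pi\) is cohomological to see the preceding map becomes an epimorphism onto the nonzero projective \(\pi T'\), and then lift the resulting splitting back to \(\mathcal{T}\) via the local-ring structure of \(\End_\mathcal{T}(T')\) to force the morphism to vanish. The only cosmetic differences are that the paper packages the lifting step as Lemma~\ref{splitindec} and phrases the argument by contradiction, whereas you inline that lemma and argue directly.
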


\begin{proof}
Assume that we have \(\bar f\neq0\), where \(f:X\rightarrow Y\). For at least one indecomposable \(T'\in\add T\), there exists at least one map \(g:T'\rightarrow X\) such that the composition \(T'\xrightarrow{g} X\xrightarrow f Y\) is non-zero. Since \(f\in \mathcal J\), we also have that \(fg \in \mathcal J\).
The morphism \(T'\xrightarrow{fg}Y\) can be completed to the triangle  \(Z\xrightarrow{h}T'\xrightarrow{fg}Y\rightarrow \Sigma Z\).

Since \(\pi\) is cohomological, we get the following exact sequence in \(\mathcal A\): \[\pi (Z)\xrightarrow{\pi(h)}\pi(T')\xrightarrow{\pi(fg)=0}\pi(Y).\] 
Since \(fg\in\mathcal J\) we have \(\pi(fg)=0\) in \(\mathcal A\), so \(\pi(h)\) is an epimorphism.
Since \(T'\) is an indecomposable summand of \(T\) we have that \(\pi(T')\) is an indecomposable projective in \(\mathcal A\), and hence \(\pi(h)\) is split epi. By lemma \ref{splitindec}, \(h\) is split epi, giving a morphism \(u\) such that \(1_{T'}=hu\).

From the distinguished triangle \(Z\stackrel{h}{\rightarrow}T'\stackrel{fg}{\rightarrow} Y\rightarrow \Sigma Z\) we see that the composition \(fgh=0\). By composing with \(u\), we get that \(0=fghu=fg\), which is a contradiction. Hence \(\bar f = 0\).
\end{proof}

\begin{lemma}
Let \(\Gamma=\End_{\mathcal T}(T)^{op}\) and \(\Lambda=\End_{\mathcal A}(P)^{op}\). 
Then \(\Gamma\) and \(\Lambda\) are isomorphic as \(k\)-algebras.
\end{lemma}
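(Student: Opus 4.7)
The plan is to show that the functor $\pi$ itself induces the required isomorphism; taking opposites at the end is a formality. Specifically, define a map
\[\phi : \End_{\mathcal T}(T) \longrightarrow \End_{\mathcal A}(P), \qquad \phi(f) = \pi(f),\]
which makes sense because $\pi(T) = P$. Since $\pi$ is a $k$-linear functor, $\phi$ is a homomorphism of $k$-algebras, and the same formula (read with the opposite composition) gives a $k$-algebra homomorphism $\Gamma \to \Lambda$. It then suffices to verify that $\phi$ is bijective.

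For surjectivity, I would simply invoke the fact that $\pi$ is full: any $\beta \in \End_{\mathcal A}(P)$ lifts to some $f \in \Hom_{\mathcal T}(T,T)$ with $\pi(f) = \beta$, so $\phi(f) = \beta$.

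For injectivity, suppose $\phi(f) = \pi(f) = 0$ for some $f \in \End_{\mathcal T}(T)$. By definition of the quotient this means $f \in \mathcal J$. Lemma \ref{idealhom} then gives $\Hom_{\mathcal T}(T,f) = 0$, i.e.\ $f \circ g = 0$ for every $g \in \Hom_{\mathcal T}(T,T)$. Evaluating at $g = 1_T$ yields $f = 0$, so $\phi$ is injective.

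There is no real obstacle here; the essential content has already been extracted in Lemma \ref{idealhom}, which is precisely what lets us upgrade the trivial statement "$\pi(f) = 0$" to the strong statement "$f$ annihilates every $T \to T$ map on the right". The only small care needed is to be explicit that $\phi$ is a $k$-algebra map (composition and $k$-linearity are preserved because $\pi$ is a $k$-linear functor), and to observe that an isomorphism of $k$-algebras automatically gives an isomorphism of the opposite algebras, so $\Gamma \cong \Lambda$ follows.
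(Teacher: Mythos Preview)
Your proof is correct and is essentially the same as the paper's: both define the map via $\pi$, use fullness for surjectivity, and for injectivity invoke Lemma~\ref{idealhom} to deduce $f\circ g=0$ for all $g$, then set $g=1_T$. The only difference is cosmetic---you work with $\End_{\mathcal T}(T)$ and pass to opposites at the end, while the paper phrases things directly in $\Gamma$.
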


\begin{proof}
We know that \(\pi\) is a full and dense \(k\)-functor between \(\mathcal T\) and \(\mathcal A\). Hence it induces an algebra epimorphism 
\[\pi: \Gamma\rightarrow \End_{\mathcal A}(\pi(T))^{op}=\Lambda.\] 
It remains to show that \(\pi\) is a monomorphism as well.

Let \(f\in\Gamma=\End_{\mathcal T}(T)^{op}\) be such that \(\tilde\pi(f)=0\) in \(\mathcal A\). Then \(f\) is in the ideal \(\mathcal J\), and hence \(\HomT f=0\) by lemma \ref{idealhom}. This means that for any \(g\in\Gamma\) we must have \(gf=0\). In particular, \(f=1_Tf=0\). Hence \(\ker(\tilde\pi)=0\) and \(\Gamma \cong \Lambda\) as \(k\)-algebras.
\end{proof}

From \cite{mitchell} it is known that since \(\mathcal A\) has a projective generator \(P\), there is an equivalence of categories \(\mathcal A\cong \modf \Lambda=\modf \End_\mathcal A(P)^{\op}\). From the previous result we now know that \(\modf \Gamma \cong \modf \Lambda\). That is, we have two functors \(\pi\) and \(\HomT-\) such that \(\pi,\HomT{-}:\mathcal{T}\rightarrow \modf\Gamma\). Next we show that these two functors are naturally isomorphic.

\begin{theorem}
Let \(\pi:\mathcal T\rightarrow \mathcal A\) be a quotient functor from a triangulated category to an abelian category.
Let \(P\) be the projective generator of \(\mathcal A\), and let \(T\) be its minimal preimage in \(\mathcal T\). 

Then \(\pi\) is naturally isomorphic to \(\HomT -\).
\end{theorem}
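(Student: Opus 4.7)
The plan is to construct an explicit natural transformation $\eta: \Hom_{\mathcal{T}}(T,-) \to \pi(-)$, using the identification $\mathcal{A} \cong \modf \Lambda \cong \modf \Gamma$ from Mitchell's theorem in which an object $Y\in\mathcal{A}$ is sent to the right $\Lambda$-module $\Hom_{\mathcal{A}}(P,Y)$. Under this equivalence, $\pi(X)$ corresponds to $\Hom_{\mathcal{A}}(P,\pi(X))$, so I would define, for each $X\in\mathcal{T}$,
\[
\eta_X : \Hom_{\mathcal{T}}(T,X) \longrightarrow \Hom_{\mathcal{A}}(P,\pi(X)), \qquad f \longmapsto \pi(f).
\]
Naturality in $X$ is automatic: for $\varphi:X\to Y$ in $\mathcal{T}$, both $\eta_Y\circ \Hom_{\mathcal{T}}(T,\varphi)$ and $\Hom_{\mathcal{A}}(P,\pi(\varphi))\circ \eta_X$ send $f$ to $\pi(\varphi f)=\pi(\varphi)\pi(f)$. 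The $\Gamma$-linearity is equally immediate, using the preceding lemma to identify the $\Gamma$-action on the target (precomposition in $\End_{\mathcal{A}}(P)^{\op}$) with the $\Gamma$-action on the source (precomposition in $\End_{\mathcal{T}}(T)^{\op}$) via $\pi$.

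The content is in showing $\eta_X$ is bijective for every $X$. Surjectivity is handed to us by the hypothesis that $\pi$ is full: every morphism $g:P\to\pi(X)$ in $\mathcal{A}$ lifts to some $f:T\to X$ in $\mathcal{T}$ with $\pi(f)=g$, so $\eta_X(f)=g$. For injectivity, suppose $\eta_X(f)=\pi(f)=0$. Then $f\in\mathcal{J}(T,X)$, and Lemma \ref{idealhom} applies to give $\Hom_{\mathcal{T}}(T,f)=0$ as a morphism of $\Gamma$-modules. Evaluating this at $1_T\in \Hom_{\mathcal{T}}(T,T)$ yields $f=f\circ 1_T = 0$, so $\eta_X$ is injective. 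This is the one place where the specific structure of $T$ (rather than just its existence) is being used, and Lemma \ref{idealhom} is doing all the work; I would flag this as the only non-formal step.

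Putting these together, $\eta$ is a natural isomorphism of functors $\mathcal{T}\to \modf\Gamma$, which upon composing with the equivalence $\modf\Gamma\simeq\mathcal{A}$ gives the desired natural isomorphism $\Hom_{\mathcal{T}}(T,-)\cong \pi$. I do not anticipate any real obstacle; the argument is essentially Yoneda combined with fullness of $\pi$ and Lemma \ref{idealhom}, and the only care needed is to pin down the $\Gamma$-module structures on the two sides so that $\eta_X$ is visibly a morphism in $\modf\Gamma$ rather than just a $k$-linear map.
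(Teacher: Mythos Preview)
Your proposal is correct and follows essentially the same route as the paper: both define $\eta_X:\Hom_{\mathcal T}(T,X)\to\Hom_{\mathcal A}(P,\pi(X))$ via $f\mapsto\pi(f)$, use fullness of $\pi$ for surjectivity, and invoke Lemma~\ref{idealhom} (followed by evaluation at $1_T$) for injectivity. You are slightly more explicit about $\Gamma$-linearity and the evaluation step, while the paper packages naturality into a commutative diagram, but there is no substantive difference.
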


\begin{proof}
By \cite{mitchell}, the equivalence of categories between \(\mathcal A\) and \(\modf \Lambda\) is given by \(\Hom_{\mathcal A}(P,-)\). 

Let \(X\) be an object in \(\mathcal T\). Consider the map \[\eta:\Hom_{\mathcal T}(T,X)\rightarrow\Hom_{\mathcal A}(P,\pi(X))\] induced by \(\pi\) (recall that \(P=\pi(T)\)). This is an epimorphism, since \(\pi\) is a full functor. Let \(g\in \HomT X\). If \(\pi(g)=0\), then \(g\in\mathcal J\), so \(\Hom_{\mathcal T}(T,g)=0\). Hence \(g=0\), and \(\eta\) is an isomorphism.

For any object \(X\) in \(\mathcal T\) we thus know that \[\pi(X)\cong\Hom_{\mathcal A}(P,\pi(X))\cong\Hom_{\mathcal C}(T,X).\] We will show that this is a natural transformation.

Let \(f:X\rightarrow Y\) be a morphism in \(\mathcal T\). Consider the following diagram:
\begin{center}
\begin{tikzpicture}
\node(A1) at (8,2) {\(\HomT X\)};
\node(D1) at (4.5,2){\(\Hom_{\mathcal A}(P,\pi(X))\)};
\node(C1) at (0,2){\(\pi(X)\)};
\node(A2) at (8,0){\(\HomT Y\)};
\node(D2) at (4.5,0){\(\Hom_{\mathcal A}(P,\pi(Y))\)};
\node(C2) at (0,0){\(\pi(Y)\)};
\draw[->] (C1)--node[anchor=west]{\(\pi(f)\)}(C2);
\draw[->](D1)--node[anchor=west]{\(\Hom_{\mathcal A}(P,\pi(f))\)}(D2);
\draw[->](A1)--node[anchor=west]{\(\HomT f \)}(A2);
\draw[->] (C1)--node[anchor=south]{\(\Hom_\mathcal{A}(P,-)\)}(D1);
\draw[->] (C2)--node[anchor=south]{\(\Hom_\mathcal{A}(P,-)\)}(D2);
\draw[->] (D1)--node[anchor=south]{\(\cong\)}(A1);
\draw[->] (D2)--node[anchor=south]{\(\cong\)}(A2);
\end{tikzpicture}
\end{center}
The first square commutes because \(\Hom_{\mathcal A}(P,\pi(f))=\Hom_\mathcal{A}(P,-)\circ\pi(f)\). The second square commutes by the functoriality of \(\pi\). Hence we have defined a natural transformation. Since the map for each object is an isomorphism, it is also a natural isomorphism.

\end{proof}


\section{When is \(\HomT{-}\) a quotient functor?}\label{FullDense}
In the previous section, we showed that the quotient functor is representable. This poses the question of when representable functors are quotient functors.

In this section we give our main result. It concerns technical conditions on an object \(T\) that are equivalent to \(\HomT{-}\) being a quotient functor (i.e. full and dense). 

We start by giving two useful lemmas. The first is well known, and the proof in \cite[ch.~II.~2]{ARS} extends our case.  For more details see e.g.\  
\cite{krause}.
Recall that \(\Gamma=\End_\mathcal T (T)\op\).
\begin{lemma}\label{addproj}
Let \(T\) be an arbitrary object in \(\mathcal T\).
\(\HomT -\) induces an equivalence \[\add T\cong \proj \Gamma\]
\end{lemma}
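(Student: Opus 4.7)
The plan is to verify this classical Yoneda-type equivalence in three steps: (i) show the restriction of $\HomT{-}$ to $\add T$ lands in $\proj \Gamma$, (ii) verify fully faithfulness via Yoneda, and (iii) verify essential surjectivity using Krull–Schmidt. Throughout, $F \coloneqq \HomT{-}|_{\add T}$.

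First I would observe that $F(T) = \HomT{T} = \End_{\mathcal T}(T)$ is naturally the regular $\Gamma$-module (a free module of rank one), since $\Gamma = \End_{\mathcal T}(T)^{op}$ acts on the right of $\HomT{-}$ by precomposition, which is the left action once we take the opposite algebra. Because $F$ is additive and representable, it sends finite direct sums to finite direct sums, so $F(T^n) \cong \Gamma^n$ is free, and hence for any $X \in \add T$, $F(X)$ is a direct summand of some $F(T^n) = \Gamma^n$, i.e.\ a finitely generated projective $\Gamma$-module. Thus $F$ does land in $\proj \Gamma$.

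For fully faithfulness, the Yoneda lemma gives a natural isomorphism
\[
\Hom_\Gamma(\HomT{T},\HomT{Y}) \xrightarrow{\cong} \HomT{Y},\quad \varphi \mapsto \varphi(1_T),
\]
whose inverse is $(g \mapsto (h \mapsto g \circ h))$. The canonical map
\[
\Hom_{\mathcal T}(T,Y) \longrightarrow \Hom_\Gamma(\HomT{T},\HomT{Y}),\quad g \mapsto \HomT{g}
\]
composed with the Yoneda isomorphism above is the identity, so $F$ is fully faithful on the full subcategory generated by $T$. Extending by additivity, the map $\Hom_{\mathcal T}(X,Y) \to \Hom_\Gamma(F(X),F(Y))$ is an isomorphism for all $X, Y \in \add T$.

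Finally, for essential surjectivity, let $Q \in \proj \Gamma$. Then $Q$ is a direct summand of $\Gamma^n$ for some $n$, so there is an idempotent $e \in \End_\Gamma(\Gamma^n)$ with $Q \cong \im e$. Since $F$ is fully faithful, there is an idempotent $\tilde e \in \End_{\mathcal T}(T^n)$ with $F(\tilde e) = e$. Because $\mathcal T$ is Krull–Schmidt, idempotents split, giving $\tilde e = \iota \rho$ with $\rho \iota = 1$ on some object $X \in \add T$, and $F(X) \cong \im e \cong Q$. This will be the main technical point, but it is immediate from the Krull–Schmidt assumption on $\mathcal T$ built into the setup.
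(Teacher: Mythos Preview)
Your proof is correct and follows the standard Yoneda-type argument. The paper does not actually supply a proof of this lemma; it simply remarks that the statement is well known, that the proof in \cite[ch.~II.~2]{ARS} extends to the present setting, and points to \cite{krause} for further details. Your three-step outline (landing in $\proj\Gamma$, fully faithfulness via Yoneda, essential surjectivity via idempotent splitting in a Krull--Schmidt category) is precisely the argument one finds in those references, so there is no substantive difference in approach to report.
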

The second lemma is an extension of the first. Recall that we write \(\HomT f=\overline f\).

\begin{lemma}\label{fromTenough}
Let \(T\) be an arbitrary object in \(\mathcal T\).
Let \(T_0\in\add T\), let \(X, Y\in\mathcal T\) and let \(T_0\xrightarrow f X\), \(T_0\xrightarrow g Y\) and \(X\xrightarrow h Y\) be morphisms. 

If \(\overline g=\overline h\circ \overline f\), then \(g=hf\).
\end{lemma}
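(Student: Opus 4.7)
The plan is to unfold the hypothesis $\overline g = \overline h \circ \overline f$ as a pointwise condition on compositions in $\mathcal T$, and then promote it to the equality $g = hf$ by exploiting that $T_0$ lies in $\add T$. The functor $\HomT{-}$ acts on morphisms by post-composition, so for every $\alpha \in \HomT{T_0}$ one has $\overline f(\alpha) = f\alpha$, $\overline g(\alpha) = g\alpha$, and $(\overline h \circ \overline f)(\alpha) = hf\alpha$. The hypothesis therefore says precisely
\[
g\alpha = hf\alpha \qquad \text{for every morphism } \alpha\colon T \to T_0.
\]

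Next, since $T_0 \in \add T$, I would pick a decomposition $T_0 \oplus T_0' \cong T^n$ with corresponding split monomorphism $s\colon T_0 \to T^n$ and retraction $r\colon T^n \to T_0$ satisfying $rs = 1_{T_0}$. Using the canonical summand inclusions $\iota_i\colon T \to T^n$ and projections $\pi_i\colon T^n \to T$, which satisfy $1_{T^n} = \sum_{i=1}^n \iota_i \pi_i$, set $r_i = r\iota_i\colon T \to T_0$ and $s_i = \pi_i s\colon T_0 \to T$. Then $1_{T_0} = rs = \sum_{i=1}^n r_i s_i$, and crucially each $r_i$ is a morphism out of $T$, so falls within the scope of the hypothesis.

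Concluding is now immediate: the first step gives $gr_i = hfr_i$ for every $i$, whence
\[
g \;=\; g \cdot 1_{T_0} \;=\; \sum_{i=1}^n (gr_i)\, s_i \;=\; \sum_{i=1}^n (hfr_i)\, s_i \;=\; hf \cdot 1_{T_0} \;=\; hf.
\]
I do not expect any genuine obstacle; the lemma is essentially the observation that morphisms out of an object in $\add T$ are determined by their post-compositions with the maps $T \to T_0$, which packages the spirit of Lemma~\ref{addproj} in a form that still applies when $Y$ does not lie in $\add T$.
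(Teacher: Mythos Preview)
Your argument is correct and follows the same underlying idea as the paper's proof: factor the identity on \(T_0\) through copies of \(T\), then apply the pointwise hypothesis \(g\alpha = hf\alpha\) for \(\alpha\colon T\to T_0\). The paper carries this out by first decomposing \(T_0=\bigoplus_j T_0^j\) into indecomposables, observing that each \(T_0^j\) is a summand of \(T\) (which uses the Krull--Schmidt hypothesis on \(\mathcal T\)), and then chasing the projection \(p\colon T\to T_0^j\) through a commutative square built from \(\Hom_{\mathcal T}(i,-)\). Your version bypasses the indecomposable decomposition entirely: you embed \(T_0\) directly as a summand of \(T^n\) and write \(1_{T_0}=\sum r_i s_i\) with each \(r_i\colon T\to T_0\). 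This is a little cleaner and, as a bonus, does not invoke Krull--Schmidt, so it would go through for any additive \(\mathcal T\); the paper's diagram chase is really the same computation unpacked componentwise.
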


\begin{proof}
We have assumed that \(\mathcal T\) is Krull-Schmidt. Let \[T_0=\bigoplus_{j=0}^n T_0^j\] be the decomposition of \(T_0\) where the \(T_0^j\) are all indecomposable. We rewrite \(f=[f_0\cdots f_n]\) and \(g=[g_0\cdots g_n]\) with respect to this composition. Note that if for each \(j\) we have \(g_j=hf_j\), then \(g=hf\), so fix one \(j\).

Since \(\overline g=\overline h \overline f\), and \(\HomT -\) distributes over direct sums, we know that \(\overline g_j=\overline h\overline f_j\).
Since \(T_0^j\) is an indecomposable element in \(\add T\), \(T_0^j\) must be a summand of \(T\). Let \(i:T_0^j\rightarrow T\) and \(p:T\rightarrow T_0^j\) be the direct sum injection and projection respectively. We then construct the following commutative diagram:
\begin{center}
\begin{tikzpicture}
\node(TT0) at (0,2) {\(\HomT {T_0^j}\)};
\node(T0T0) at (0,0) {\(\Hom_\mathcal{T}(T_0^j, T_0^j)\)};
\node(TX) at (4,2) {\(\HomT {X}\)};
\node(T0X) at (4,0) {\(\Hom_\mathcal{T}(T_0^j, X)\)};
\node(TY) at (8,2) {\(\HomT {Y}\)};
\node(T0Y) at (8,0) {\(\Hom_\mathcal{T}(T_0^j, Y)\)};
\draw[->] (TT0) -- node[above]{\(\overline f_j\)}(TX);
\draw[->] (TX) -- node[above]{\(\overline h\)}(TY);
\draw[->] (T0T0) -- node[above=2pt]{\(\Hom_\mathcal{T}(T_0^j, {f_j})\)}(T0X);
\draw[->] (T0X) -- node[above=2pt]{\(\Hom_\mathcal{T}(T_0^j, h)\)}(T0Y);
\draw[->] (TT0) -- node[left]{\(\Hom_\mathcal{T}(i, {T_0})\)}(T0T0);
\draw[->] (TX) -- node[left]{\(\Hom_\mathcal{T}(i, {X})\)}(T0X);
\draw[->] (TY) -- node[left]{\(\Hom_\mathcal{T}(i, {Y})\)}(T0Y);
\end{tikzpicture}
\end{center}
We know that \(p\in\HomT{ T_0^j}\). By chasing \(p\) through the diagram, we get that 
\begin{align*}
g_j&=g_jp i 
\\&=[\Hom_\mathcal{T}(i, Y)\overline g_j](p) 
=[\Hom_\mathcal{T}(i, Y)\overline h\overline f_j](p)\\
&=[\Hom_\mathcal{T}(T_0^j,h)\Hom_\mathcal{T}(T_0^j, {f_i})\Hom_\mathcal{T}(i, {T_0})](p)\\
&=hf_jp i=hf_j
\end{align*}
\end{proof}

An object \(X\) is called \emph{\(T\)-supported} if \(\HomT X\neq 0\).

We are now ready to prove one of our main theorems.

\begin{theorem}\label{abfulldense}
\(\HomT{-}\) is a quotient functor, i.e.\ full and dense, if and only if the following two conditions are satisfied
\begin{description}
\item[a] For all right minimal morphisms \(T_1\rightarrow T_0\), where \(T_0, T_1\in\add T\), all triangles \(T_1\rightarrow T_0\rightarrow X\xrightarrow h \Sigma T_1\) satisfy \(\HomT h=0\).
\item[b] For all indecomposable, \(T\)-supported objects \(X\) there exists a triangle \(T_1\rightarrow T_0\rightarrow X\xrightarrow h \Sigma T_1\) with \(T_1,T_0\in \add T\) and \(\HomT h=0\).
\end{description}
\end{theorem}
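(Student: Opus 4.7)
The plan is to prove each direction of the iff separately.

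For sufficiency, assuming (a) and (b) hold: density follows by taking any $M\in\modf\Gamma$, forming its minimal projective presentation $P_1\to P_0\to M\to 0$, and using Lemma \ref{addproj} to lift this to a right-minimal $f\colon T_1\to T_0$ in $\add T$. Completing $f$ to a triangle $T_1\to T_0\to X\xrightarrow h\Sigma T_1$ and applying (a) forces $\HomT h=0$, so $\HomT X\cong\coker \HomT f=M$. For fullness I reduce to $X$ indecomposable: if $\HomT X=0$ take $\psi=0$, otherwise $X$ is $T$-supported and (b) supplies a triangle $T_1\xrightarrow f T_0\xrightarrow g X\xrightarrow h\Sigma T_1$ with $\HomT h=0$, so $\HomT g$ is epi. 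Given $\phi\colon\HomT X\to\HomT Y$, lift $\phi\circ\HomT g$ to $\alpha\colon T_0\to Y$ via Lemma \ref{addproj}; since $\HomT{\alpha f}=\phi\HomT g\HomT f=0$, Lemma \ref{fromTenough} yields $\alpha f=0$ in $\mathcal T$, so $\alpha$ factors through $g$ as $\alpha=\psi g$, and then $\HomT\psi=\phi$ follows from $\HomT g$ being epi.

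For necessity, assume $\HomT{-}$ is full and dense. I establish (b) first. Given indecomposable $T$-supported $X$, lift the minimal projective presentation of $\HomT X$ via Lemma \ref{addproj} and fullness to a right-minimal $f\colon T_1\to T_0$ and a morphism $g\colon T_0\to X$ with $\HomT g$ a projective cover. Lemma \ref{fromTenough} gives $gf=0$; completing $f$ to a triangle $T_1\xrightarrow f T_0\xrightarrow p Z\xrightarrow{h_0}\Sigma T_1$ yields a factorization $g=\psi p$ with $\psi\colon Z\to X$. A direct computation (using that both $\HomT g$ and $\HomT p$ factor through the quotient $\HomT{T_0}\twoheadrightarrow\coker\HomT f$) shows $\HomT\psi|_{\im\HomT p}$ is an isomorphism onto $\HomT X$, so $\HomT\psi$ is a split epimorphism with section $\im\HomT p\hookrightarrow\HomT Z$. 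Lemma \ref{splitindec}(1) combined with indecomposability of $X$ lifts this splitting to $\mathcal T$, so $Z\cong X\oplus Z''$. Applying the Verdier octahedron to the composition $T_0\xrightarrow p Z\twoheadrightarrow X$ produces the sought triangle $Y\to T_0\to X\xrightarrow{h}\Sigma Y$ with $\HomT h=0$, and a companion triangle $T_1\to Y\to Z''\to\Sigma T_1$ from which I identify $Y$ as an object of $\add T$ by absorbing $Z''$.

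For (a), given a right-minimal $f\colon T_1\to T_0$ completed to $T_1\to T_0\to X\xrightarrow h\Sigma T_1$, I reduce to $X$ indecomposable and $T$-supported. By (b) now established, there is a comparison triangle $T_1'\to T_0'\xrightarrow{g'} X\xrightarrow{h'}\Sigma T_1'$ with $\HomT{h'}=0$, so $\HomT{g'}$ is epi. Using projectivity of $\HomT{T_0}$, I lift $\HomT g$ through $\HomT{g'}$ to obtain $\beta\colon T_0\to T_0'$ with $g=g'\beta$ (via Lemmas \ref{addproj} and \ref{fromTenough}); rotating the triangles and applying TR3 extends this to a morphism of triangles $(\alpha,\beta,1_X)$ with $\Sigma\alpha\circ h=h'$. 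Applying $\HomT{-}$ gives $\HomT{\Sigma\alpha}\circ\HomT h=\HomT{h'}=0$, and right minimality of $f$---exploited via the fact that $\HomT{\Sigma f'}$ is injective (a consequence of $\HomT{h'}=0$) together with structural analysis of $\alpha$---forces $\HomT h=0$.

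The hardest step will be in the forward direction: showing $Y\in\add T$ after the octahedron in the proof of (b) requires careful absorption of the remainder $Z''$ using the companion triangle and the Krull-Schmidt structure; and deducing $\HomT h=0$ from $\HomT{\Sigma\alpha}\circ\HomT h=0$ in the proof of (a) requires exploiting right minimality of $f$ in a way that remains delicate even after comparison with the triangle supplied by (b).
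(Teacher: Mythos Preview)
Your sufficiency argument is correct and in fact slightly cleaner than the paper's: for fullness you only invoke \textbf{b} for the source $X$ and lift $\phi\circ\HomT g$ directly via Yoneda, whereas the paper applies \textbf{b} to both $X$ and $Y$ and compares two projective resolutions. Both work.

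For necessity you reverse the paper's order, proving \textbf{b} before \textbf{a}. Your argument for \textbf{b} can be made to work, but the step you flag as hardest---``absorbing $Z''$'' to conclude $Y\in\add T$---is not just bookkeeping. It requires showing that the component $p_{Z''}\colon T_0\to Z''$ vanishes (which needs the \emph{specific} section $s\colon X\to Z$ with $\HomT s$ landing in $\im\HomT p$, together with Lemma~\ref{fromTenough}), then that $h_{Z''}\colon Z''\to\Sigma T_1$ is split mono via $\Hom_{\mathcal T}(-,Z'')$, and finally a splitting of the companion triangle to identify $Y$ with a summand of $T_1$. This is essentially the same split-mono machinery the paper deploys, but the paper uses it inside its proof of \textbf{a}, not \textbf{b}.

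Your argument for \textbf{a}, however, has a genuine gap. First, the reduction to $X$ indecomposable is not justified: $X$ is the cone of a given right-minimal $f$ and need not be indecomposable, nor do its summands sit in triangles of the required shape. This can be patched by observing that \textbf{b} extends additively to arbitrary objects. The real problem is the final step: from $\HomT{\Sigma\alpha}\circ\HomT h=0$ you want $\HomT h=0$, i.e.\ you need $\HomT{\Sigma\alpha}$ to be injective on $\im\HomT h=\ker\HomT{\Sigma f}$. Right minimality of $f$ gives you no control over $\alpha$, and nothing prevents $\ker\HomT{\Sigma\alpha}$ from meeting $\ker\HomT{\Sigma f}$ nontrivially (for instance, if $\HomT X$ happens to be projective one can take $T_1'=0$, making $\HomT{\Sigma\alpha}$ the zero map). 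The paper's proof of \textbf{a} avoids this entirely: it factors $\overline g$ through its image $\HomT Y$, shows $Y$ is a direct summand of $X$ via a delicate left-minimality argument, proves the complement $R$ is a summand of $\Sigma T_1$, and then uses right minimality of $\Sigma f$ to force $\HomT R=0$. Your comparison-triangle strategy does not recover this, and the phrase ``structural analysis of $\alpha$'' is not a substitute for it.
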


\begin{proof}
Assume first that \textbf  a and \textbf b hold. We will first show that this means that \(\HomT{-}\) is dense, and then that it is full.

\textbf{a implies dense:}
Let \(X\) be an arbitrary object in \(\modf \Gamma\). We need to find an object \(Y\) in \(\mathcal T\) such that \(\HomT Y\cong X\).
We have the following minimal projective presentation of \(X\)
\[\HomT {T_1}\xrightarrow{\overline f} \HomT {T_0}\xrightarrow{g} X\rightarrow 0,\]
where \(T_1, T_0\in\add T\), by the equivalence \(\proj \Gamma\cong\add T\). 

Here, \(\overline f\) is the composition of the monomorphism \(\Ker g\rightarrow \HomT{T_0}\) and the projective cover \(\HomT{T_1}\rightarrow\Ker g\). 
The former is right minimal because it is a monomorphism. The latter is right minimal because it is a projective cover (see e.g.\ \cite[thm I.4.1]{ARS}). Hence  \(\overline f\) is right minimal.

The morphism \(f:T_1\rightarrow T_0\) in \(\mathcal T\) is right minimal by virtue of the equivalence \(\proj \Gamma\cong\add T\). We can complete \(f\) to an distinguished triangle 
\[T_1\xrightarrow f T_0\rightarrow Y\rightarrow \Sigma T_1.\]
Applying \(\HomT{-}\) and using \textbf a we get the following exact sequence:
\[\HomT {T_1}\xrightarrow{\overline f} \HomT {T_0}\rightarrow \HomT Y\rightarrow 0.\]
It follows from the uniqueness of cokernels that \(X\cong\HomT Y\), and thus \(\HomT -\) is dense.

\textbf{b implies full:} 
Let \(X\) and \(Y\) be two objects in \(\mathcal T\). Let \(f:\Hom_{\mathcal T}(T, X)\rightarrow\Hom_{\mathcal T}(T,Y)\) be an arbitrary morphism in \(\modf \Gamma\).
We need to find a morphism \(f':X\rightarrow Y\) in \(\mathcal T\) such that \(\overline{f'}=f\). Since the functor \(\HomT -\) distributes over direct sums, we assume without loss of generality that \(X\) and \(Y\) are indecomposable.

If \(X\) or \(Y\) is not \(T\)-supported, then obviously \(f=0\), so \(0:X\rightarrow Y\) maps to \(f\). In the following we therefore assume \(\HomT X\neq 0\) and \(\HomT Y\neq 0\)

Using property \textbf b, we define the following exact triangles: 

\begin{center}
\begin{tikzpicture}[scale=.90]
\node(T_1) at (0,1){\(T_1\)};
\node(T_0) at (2,1){\(T_0\)};
\node(T_1') at (0,0){\(T_1'\)};
\node(T_0') at (2,0){\(T_0'\)};
\node(X) at (4,1){\(X\)};
\node(Y) at (4,0){\(Y\)};
\node(ST_1) at (6,1){\(\Sigma T_1\)};
\node(ST'_1) at (6,0){\(\Sigma T'_1\)};
\draw[->](T_0)--node[anchor=south]{\(g\)}(X);
\draw[->](T_0')--node[anchor=south]{\(g'\)}(Y);
\draw[->](T_1)--(T_0);
\draw[->](T_1')--(T_0');
\draw[->](X)--(ST_1);
\draw[->](Y)--(ST'_1);
\end{tikzpicture}
\end{center}

By applying the functor \(\HomT -\) and using the comparison theorem for projective resolutions, we get the following diagram in \(\modf \Gamma\) with exact rows:

\begin{center}
\begin{tikzpicture}
\node(T_1) at (0,1.5){\(\HomT{T_1}\)};
\node(T_0) at (4,1.5){\(\Hom_{\mathcal T}(T,T_0)\)};
\node(T_1') at (0,0){\(\Hom_{\mathcal T}(T,T_1')\)};
\node(T_0') at (4,0){\(\Hom_{\mathcal T}(T,T_0')\)};
\node(X) at (8,1.5){\(\Hom_{\mathcal T}(T,X)\)};
\node(Y) at (8,0){\(\Hom_{\mathcal T}(T,Y)\)};
\node(0X) at (10,1.5){\(0\)};
\node(0Y) at (10,0){\(0\)};
\draw[->](X)--node[anchor=west]{\(f\)}(Y);
\draw[->](T_0)--node[anchor=south]{\(\overline g\)}(X);
\draw[->](T_0')--node[anchor=south]{\(\overline{g'}\)}(Y);
\draw[->](T_1)--(T_0);
\draw[->](T_1')--(T_0');
\draw[->](T_1)--(T_1');
\draw[->](T_0)--node[anchor=east]{\(\overline u\)}(T_0');
\draw[->](X)--(0X);
\draw[->](Y)--(0Y);
\end{tikzpicture}
\end{center}

By the equivalence between \(\add T\) and the projective objects in \(\modf \Gamma\), we can lift the left commutative square in the diagram back to \(\mathcal T\). The commutative square in \(\mathcal T\) can be completed to a morphism of triangles.

\begin{center}
\begin{tikzpicture}
\node(T_1) at (0,1.5){\(T_1\)};
\node(T_0) at (2,1.5){\(T_0\)};
\node(T_1') at (0,0){\(T_1'\)};
\node(T_0') at (2,0){\(T_0'\)};
\node(X) at (4,1.5){\(X\)};
\node(Y) at (4,0){\(Y\)};
\node(ST_1) at (6,1.5){\(\Sigma T_1\)};
\node(ST'_1) at (6,0){\(\Sigma T'_1\)};
\draw[->](T_0)--node[anchor=south]{\(g\)}(X);
\draw[->](T_0')--node[anchor=south]{\(g'\)}(Y);
\draw[->](T_1)--(T_0);
\draw[->](T_1')--(T_0');
\draw[->](T_1)--(T_1');
\draw[->](T_0)--node[anchor=east]{\(u\)}(T_0');
\draw[->](X)--(ST_1);
\draw[->](Y)--(ST'_1);
\draw[->](X)--node[anchor=west]{\(f'\)}(Y);
\draw[->](ST_1)--(ST'_1);
\end{tikzpicture}
\end{center}

Applying \(\HomT -\) once again, we get the following diagram:

\begin{center}
\begin{tikzpicture}
\node(T_1) at (0,1.5){\(\HomT{T_1}\)};
\node(T_0) at (4,1.5){\(\Hom_{\mathcal T}(T,T_0)\)};
\node(T_1') at (0,0){\(\Hom_{\mathcal T}(T,T_1')\)};
\node(T_0') at (4,0){\(\Hom_{\mathcal T}(T,T_0')\)};
\node(X) at (8,1.5){\(\Hom_{\mathcal T}(T,X)\)};
\node(Y) at (8,0){\(\Hom_{\mathcal T}(T,Y)\)};
\node(0X) at (10,1.5){\(0\)};
\node(0Y) at (10,0){\(0\)};
\draw[->](X)--node[anchor=west]{\(\overline{f'}\)}(Y);
\draw[->](T_0)--node[anchor=south]{\(\overline g\)}(X);
\draw[->](T_0')--node[anchor=south]{\(\overline{g'}\)}(Y);
\draw[->](T_1)--(T_0);
\draw[->](T_1')--(T_0');
\draw[->](T_1)--(T_1');
\draw[->](T_0)--node[anchor=east]{\(\overline u\)}(T_0');
\draw[->](X)--(0X);
\draw[->](Y)--(0Y);
\end{tikzpicture}
\end{center}

Since \[f \overline g=\overline{g'}\overline u=\overline{f'} \overline g\]
and \(\overline g\) is an epimorphism, it follows that \(f=\overline{f'}\), and we have shown \(\HomT -\) to be dense, thus finishing the first implication.

Next, assume that \(\HomT -\) is a full and dense functor. 

\textbf{Full and dense implies a:} Let \(f:T_1\rightarrow T_0\) be a right minimal morphism between objects in \(\add T\). Complete the morphism to the following triangle: 
\[T_1\xrightarrow f T_0\xrightarrow g X\xrightarrow h \Sigma T_1\] 
We want to show that \(\HomT h=0\). Use \(\HomT -\) on this triangle to obtain the diagram
\begin{center}
\begin{tikzpicture}[xscale=3.0, yscale=1.2]
\node(T_1) at (0,0){\(\HomT {T_1}\)};
\node(T_0) at (1,0){\(\HomT {T_0}\)};
\node(X) at (2,0){\(\HomT {X}\)};
\node(T_11) at (3,0){\(\HomT {\Sigma T_1}\)};
\node(Y) at (1.5,-1.5){\(\HomT {Y}\)};
\draw[->] (T_1) --node[anchor=south]{\(\overline f\)} (T_0);
\draw[->] (T_0) --node[anchor=south]{\(\overline g\)} (X);
\draw[->] (X) --node[anchor=south]{\(\overline h\)} (T_11);
\draw[->>] (T_0) --node[anchor=east]{\(\overline u\)} (Y);
\draw[right hook->] (Y) --node[anchor=west]{\(\overline v\)} (X);
\end{tikzpicture}
\end{center}
where \(\HomT Y=\im \bar g\). The image and the maps all have preimages in \(\mathcal T\), since \(\HomT -\) is full and dense.
We assume (without loss of generality) that all summands of \(Y\) are \(T\)-supported.
We want to show that \(Y\) is a direct summand of \(X\), and we start by showing that \(\HomT Y\) is a summand of \(\HomT X\).

By \(\overline u\circ\overline f=0\) and Lemma \ref{fromTenough}, we get that \(uf=0\).

Using \(\Hom_\mathcal{T} (-,Y)\) on the triangle, we get the exact sequence
\[\Hom_\mathcal{T} (X,Y)\xrightarrow{\Hom_\mathcal{T} (g,Y)}\Hom_\mathcal{T} (T_0,Y)\xrightarrow{\Hom_\mathcal{T} (f,Y)}\Hom_\mathcal{T} (T_1,Y)\]
Starting with \(u\in\Hom_\mathcal{T}(T_0,Y)\) we get that \(\Hom_\mathcal{T}(f,Y)(u)=uf=0\), so \(u\) must be in the image of \(\Hom_\mathcal{T}(g,Y)\). Thus there exists some \(w\in\Hom_\mathcal{T}(X,Y)\) with \(wg=u\).

Since \[\overline w\circ \overline v\circ \overline u=\overline w\circ\overline g=\overline u,\] and \(\overline u\) is an epimorphism, we must have \[\overline w\circ\overline v=1_{\HomT Y}.\]
Thus \(\HomT Y\) is a direct summand of \(\HomT X\).

In order to show that \(Y\) is a direct summand of \(X\), we first show that \(u\) is a left minimal morphism. Use the functor \(\Hom_\mathcal{T} (-,Y)\) to obtain \[\Hom_\mathcal{T} (Y,Y)\xrightarrow{\Hom_\mathcal{T} (u,Y)}\Hom_\mathcal{T} (T_0,Y)\] in \(\modf\End_\mathcal T Y\). By \cite[thm I.2.2]{ARS}, there exists a decomposition \[\Hom_\mathcal{T} (Y,Y)\cong \Hom_\mathcal{T} (Y_1,Y)\oplus \Hom_\mathcal{T} (Y_2,Y)\] such that \(u^\ast=\Hom_\mathcal{T} (u,Y)|_{\Hom_\mathcal{T} (Y_1,Y)}\) is a right minimal morphism and \\ \(\Hom_\mathcal{T} (u,Y)|_{\Hom_\mathcal{T} (Y_2,Y)}=0\). The preimages in \(\mathcal T\) exist by the equivalence between \(\add Y\) and \(\proj\End_\mathcal T Y\).

By the dual of lemma \ref{addproj}, there exists a preimage \(u_1\) of \(u^\ast\) such that \(u_1=u|_{Y_1}\), and \(u=\sm{u_1\\0}\).
Suppose that for a morphism \(x:Y_1\rightarrow Y_1\), we have \(xu_1=u_1\). Then \(\Hom_\mathcal{T}(u_1,Y)\Hom_\mathcal{T}(x,Y)=\Hom_\mathcal{T}(u_1,Y)\). We have that \(u^\ast=\Hom_\mathcal{T}(u_1,Y)\) is right minimal; thus \(\Hom_\mathcal{T}(x,Y)\) is an isomorphism. By (the dual of) lemma \ref{addproj}, \(x\) is also an isomorphism. Consequently, \(u_1\) is left minimal.

We know that \(\overline u={\sm{\overline u_1 \\0}}\).
Since \(\overline u\) is an epimorphism, \(Y_2\) cannot be \(T\)-supported. By choice of \(Y\) we have \(Y_2=0\). Therefore \(u=u_1\) is left minimal.

We have \(wvu=wg=u\). Thus \(wv\) is an isomorphism and \(Y\) is a direct summand of \(X\).

We rewrite the original triangle to
\begin{equation}\label{ShowR0}
T_1\xrightarrow f T_0\xrightarrow{\sm{u\\0}}Y\oplus R\xrightarrow{\sm{h_Y &h_R}}\Sigma T_1
\end{equation}
where \(h_Y=h|_Y\) and \(h_R=h|_R\). The next step is to show that \(R\) is a direct summand of \(\Sigma T_1\). Using \(\Hom_\mathcal T(-,R)\) on the triangle, we get the following exact sequence:
\[\Hom_\mathcal T(\Sigma T_1, \! R) \! \xrightarrow{\Hom_\mathcal T(\sm{h_Y &h_R}, \! R)} \! \Hom_\mathcal T(Y\oplus R,R) \! \xrightarrow{\Hom_\mathcal T\left(\sm{u\\0},R\right)} \! \Hom_\mathcal T(T_0,\! R)\]
The projection \(p_R:Y\oplus R\rightarrow R\) is contained in \(\Hom_\mathcal T(Y\oplus R,R)\). It is obviously in the kernel of \(\Hom_\mathcal T\left(\sm{u\\0},R\right)\), so it must be in the image of \(\Hom_\mathcal T(\sm{h_Y &h_R},R)\). Thus \(h_R\) is a split monomorphism, and we have \(\Sigma T_1=R\oplus S.\) The one-sided inverse of \(h_R\) we denote as \(z\).

Let \(y\in\End_\mathcal T(\Sigma T_1)\) be such that \((\Sigma f)\circ y=\Sigma f\). Since \(\Sigma\) is an autoequivalence we have \(f\circ \Sigma y=f\). Since \(f\) is right minimal,  \(\Sigma y\) is an isomorphism. But that means that \(y\) is an isomorphism as well. Thus \(\Sigma f\) is right minimal, and so is \(\overline{\Sigma f}\).

Now consider 
\begin{align*}
\overline{\Sigma f}&=\overline{\Sigma f}\circ 1_{\HomT{\Sigma T}}\\
&=\overline{\Sigma f}\circ\bm{\overline{h_R}\circ\overline{z}&0\\0&1_{\HomT S}}=\overline{\Sigma f}\circ\bm{0&0\\0&1_{\HomT S}}
\end{align*}
Consequently \(\sm{0&0\\0&1_{\HomT S}}\) is an isomorphism. Thus \(R\) is not \(T\)-supported. 

It follows that \(\overline h=0\), and \textbf a holds.

\textbf{Full and dense implies b:}
Let \(X\) be a \(T\)-supported indecomposable object in \(\mathcal T\). Consider its minimal projective presentation in \(\modf\Gamma\):
\[\HomT{T_1}\xrightarrow{\overline  f}\HomT{T_0}\xtwoheadrightarrow{\overline g}\HomT X\]
We know that \(\overline f\) is right minimal, and by lemma \ref{addproj}, so is \(f\).
Complete \(T_1\xrightarrow f T_0\) to a triangle in \(\mathcal T\)
\[T_1\xrightarrow f T_0\xrightarrow u Y\xrightarrow v \Sigma T_1,\]
where \(\overline v=0\) by condition \textbf a.

We use \(\HomT -\) on the triangle to obtain the following commutative diagram with exact rows:
\begin{center}
\begin{tikzpicture}[xscale=3.1, yscale=1.5]
\node(T_1) at (0,1){\(\HomT {T_1}\)};
\node(T_0) at (1,1){\(\HomT {T_0}\)};
\node(Y) at (2,1){\(\HomT {Y}\)};
\node(T_11) at (3,1){\(\HomT {\Sigma T_1}\)};
\node(T_1x) at (0,0){\(\HomT {T_1}\)};
\node(T_0x) at (1,0){\(\HomT {T_0}\)};
\node(X) at (2,0){\(\HomT {X}\)};
\node(0) at (3,0){0};

\draw[->] (T_1) --node[anchor=south]{\(\overline f\)} (T_0);
\draw[->] (T_0) --node[anchor=south]{\(\overline u\)} (Y);
\draw[->] (Y) --node[anchor=south]{\(\overline v=0\)} (T_11);
\draw[->] (T_1x) --node[anchor=south]{\(\overline f\)} (T_0x);
\draw[->] (T_0x) --node[anchor=south]{\(\overline g\)} (X);
\draw[->] (X) --node[anchor=south]{\(0\)} (0);

\draw[double equal sign distance] (T_1) -- (T_1x);
\draw[double equal sign distance] (T_0) -- (T_0x);
\end{tikzpicture}
\end{center}

By uniqueness of cokernels  we must have \(\HomT X\cong\HomT Y\). Thus there are maps \(x:X\rightarrow Y\) and \(y:Y\rightarrow X\) such that 
\begin{align*}
\overline x\circ\overline y=1_{\HomT Y} \textnormal{\, and \,}
\overline y\circ\overline x=1_{\HomT X}.
\end{align*}
Once again, the preimages of the isomorphisms exist due to \(\HomT -\) being full.

We have that \(yx\in \End_\mathcal T (X)\), which is a local ring since \(X\) was assumed to be indecomposable. Since \(\overline{yx}\) is an isomorphism, \(yx\) clearly cannot be nilpotent, so it must be an isomorphism. Thus \(X\) is a direct summand of \(Y\), and we write \(Y=X\oplus R\).

We are now in the same situation as diagram (\ref{ShowR0}), and we can use the same argument to show that \(\Sigma T_1=R\oplus S\). By \cite[Lemma 1.2.4]{neeman}, 
\[T_1\xrightarrow f T_0\rightarrow X\rightarrow \Sigma T_1\]
is a distinguished triangle. It fulfills the requirements of \textbf b.
\end{proof}


\begin{example} We revisit the last example presented in \cite{k-zhu}. This is an example of an abelian quotient of a triangulated category with no cluster-tilting subcategories, hence not covered by the theory developed in \cite{k-zhu}.  Let \(A=kQ/I\) be the self-injective algebra given by the quiver
\begin{center}
\begin{tikzpicture}
\node (Q) at (-1.5,0){\(Q:\)};
\node (a) at (-1,0){\(a\)};
\node (b) at (1,0){\(b\)};
\draw[->] ([yshift =  .4ex]a.east) -- node[above]{\(\alpha\)} ([yshift =  .4ex]b.west);
\draw[->] ([yshift =  -.4ex]b.west) -- node[below]{\(\beta\)} ([yshift = -.4ex]a.east);
\end{tikzpicture}
\end{center}
and the relations \(\alpha \beta\alpha, \  \beta\alpha \beta \).

\newcommand{\Maba}{\begin{smallmatrix} a \\ b \\ a \end{smallmatrix}}
\newcommand{\Mbab}{\begin{smallmatrix} b \\ a \\ b \end{smallmatrix}}
\newcommand{\Mab}{\begin{smallmatrix} a \\ b \end{smallmatrix}}
\newcommand{\Mba}{\begin{smallmatrix} b \\ a \end{smallmatrix}}

The AR-quiver of \(\modf A\) is

\begin{center}
\begin{tikzpicture}
\node(bab) at(-1,0){\(\Mbab\)};
\node(aba) at (1,0){\(\Maba\)};
\node(bab2) at (3,0){\(\Mbab\)};
\node(ba) at (0,-1){\(\Mba\)};
\node(ab) at (2,-1){\(\Mab\)};
\node(a) at(-1,-2){\(a\)};
\node(b) at (1,-2){\(b\)};
\node(a2) at(3,-2){\(a\)};
\draw[->](bab)--(ba);
\draw[->](ba)--(aba);
\draw[->](aba)--(ab);
\draw[->](ab)--(bab2);
\draw[->](a)--(ba);
\draw[->](ba)--(b);
\draw[->](b)--(ab);
\draw[->](ab)--(a2);
\end{tikzpicture}
\end{center}
where the first and the last columns are identified. The stable module category \(\underline{\modf} A\) is triangulated with suspension functor \(\Omega^{-1}\), the cozysygy. Its AR-quiver is:

\begin{center}
\begin{tikzpicture}
\node(ba) at (0,-1){\(\Mba\)};
\node(ab) at (2,-1){\(\Mab\)};
\node(a) at(-1,-2){\(a\)};
\node(b) at (1,-2){\(b\)};
\node(a2) at(3,-2){\(a\)};
\draw[->](a)--(ba);
\draw[->](ba)--(b);
\draw[->](b)--(ab);
\draw[->](ab)--(a2);
\end{tikzpicture}
\end{center}

As explained in detail in \cite{k-zhu} this triangulated category does not have any cluster-tilting subcategories. An abelian quotient can be formed, by factoring out \(\add(a)\). This abelian category has the following AR-quiver:

\begin{center}
\begin{tikzpicture}
\node(ba) at (0,-1){\(\Mba\)};
\node(ab) at (2,-1){\(\Mab\)};
\node(b) at (1,-2){\(b\)};
\draw[->](ba)--(b);
\draw[->](b)--(ab);
\end{tikzpicture}
\end{center}

The projective generator of this category is \(\Mba\oplus b\). The preimage of the projective generator is \(\Mba\oplus b\) considered as an object in \(\underline{\modf} A\). The functor \(\Hom_{\underline{\modf} A}(\Mba\oplus b,- )\) gives rise to the an abelian category with the same AR-quiver as \(\underline{\modf} A/\add(a)\). 

There is only one right minimal morphism between indecomposable objects of \(\add (\Mba \oplus b)\), namely \(\Mba\rightarrow b\). The triangle 
\[\Mba\rightarrow b\rightarrow \Mab\rightarrow\Omega^{-1}\Mba\]
shows that condition \textbf a is fulfilled.

The triangle also shows that condition \textbf b is fulfilled for \(\Mab\). For \(\Mba\) and \(b\), condition \textbf b is fulfilled by the completion of the identity morphism to a triangle. Hence theorem \ref{abfulldense} implies that \(\Hom_{\underline{\modf} A}(\Mba\oplus b,-)\) is full and dense.
\end{example}

\begin{example}
We revisit the class of examples mentioned in the introduction. We will study \(\mathcal{D}^b(kQ)/\Sigma\) where \(kQ\) is a Dynkin diagram. Specifically we consider the quiver \(A_3\) with orientation \(1\rightarrow 2\rightarrow 3\). The AR-quiver of \(\modf kA_3\) is:

\newcommand{\Mtretoen}{\begin{smallmatrix} 3 \\ 2 \\ 1 \end{smallmatrix}}
\newcommand{\Mtreto}{\begin{smallmatrix} 3 \\ 2 \end{smallmatrix}}
\newcommand{\Mtoen}{\begin{smallmatrix} 2 \\ 1 \end{smallmatrix}}

\begin{center}
\begin{tikzpicture}
\node(tre) at (-1,0){\(3\)};
\node(to) at(1,0){\(2\)};
\node(en) at (3,0){\(1\)};
\node(treto) at (0,1){\(\Mtreto\)};
\node(toen) at (2,1){\(\Mtoen\)};
\node(tretoen) at (1,2){\(\Mtretoen\)};
\draw[->](tre)--(treto);
\draw[->](treto)--(to);
\draw[->](to)--(toen);
\draw[->](treto)--(tretoen);
\draw[->](tretoen)--(toen);
\draw[->](toen)--(en);
\draw[->,densely dashed](to)--(tre);
\draw[->,densely dashed](en)--(to);
\draw[->,densely dashed](toen)--(treto);
\end{tikzpicture}
\end{center}
The AR-quiver of the triangulated category \(\mathcal T=\mathcal{D}^b(kA_3)/\Sigma\) is:
\begin{center}
\begin{tikzpicture}
\node(tre) at (-1,0){\(3\)};
\node(to) at(1,0){\(2\)};
\node(en) at (3,0){\(1\)};
\node(treto) at (0,1){\(\Mtreto\)};
\node(toen) at (2,1){\(\Mtoen\)};
\node(tretoen) at (1,2){\(\Mtretoen\)};
\node(tre2) at (3,2){\(3\)};
\node(treto2) at (4,1){\(\Mtreto\)};
\node(tretoen2) at(5,0){\(\Mtretoen\)};
\draw[->](tre)--(treto);
\draw[->](treto)--(to);
\draw[->](to)--(toen);
\draw[->](treto)--(tretoen);
\draw[->](tretoen)--(toen);
\draw[->](toen)--(en);
\draw[->](toen)--(tre2);
\draw[->](en)--(treto2);
\draw[->](tre2)--(treto2);
\draw[->](treto2)--(tretoen2);
\draw[->,densely dashed](to)--(tre);
\draw[->,densely dashed](en)--(to);
\draw[->,densely dashed](toen)--(treto);
\draw[->,densely dashed](tre2)--(tretoen);
\draw[->,densely dashed](treto2)--(toen);
\draw[->,densely dashed](tretoen2)--(en);
\end{tikzpicture}
\end{center}
where we include some objects twice to indicate which objects are identified. 

This category does not have any cluster-tilting subcategories, so it is not possible to attain an abelian quotient by the method used in \cite{k-zhu}. However it is in some sense already close to being an abelian category; the difference is just two irreducible maps! We let \(T=\Mtretoen\oplus\Mtreto\oplus 3\). Applying the functor \(\Hom_{\mathcal{T}}(\Mtretoen\oplus\Mtreto\oplus 3,-)\) we return to an abelian category equivalent to the module category. The functor is easily seen to be full and dense directly.

There are two right minimal morphisms between indecomposable summands of \(T\), namely \(3\rightarrow \Mtreto\) and \(\Mtreto\rightarrow \Mtretoen\). Thus the triangles \[3\rightarrow\Mtreto\rightarrow 2\rightarrow3\] and \[\Mtreto \rightarrow \Mtretoen \rightarrow 1\rightarrow\Mtreto\] show that part \textbf{a} of theorem \ref{abfulldense} is satisfied. 

The objects \(3,\Mtreto\) and \(\Mtretoen\) are in \(\add T\), so the completions of the identity maps on these objects fulfills condition \textbf b of theorem \ref{abfulldense}.
The above triangles fulfill condition \textbf b for the objects \(1\) and \(2\).
 The only object that remains is \(\Mtoen\), and in this case the triangle \[3\rightarrow \Mtretoen \rightarrow \Mtoen\rightarrow 3\] satisfies condition \textbf{b}. Hence by theorem \ref{abfulldense} the functor \(\Hom_{\mathcal{T}}(\Mtretoen\oplus\Mtreto\oplus 3,-)\) is full and dense. 
\end{example}


\section{AR-structure in the abelian quotient}
In this section we show that the AR-structure of \(\mathcal T\) is preserved as much as one can hope for in the abelian quotient. Let 
\[\Delta: \tau X \stackrel{f}{\longrightarrow} Y \stackrel{g}{\longrightarrow} X\stackrel{h}{\longrightarrow} \Sigma \tau X\] 
 be an AR-triangle in \(\mathcal T\). Assume that none of the objects in \(\Delta\) are sent to zero, that \(\tau X\) is not sent to an injective and that \(X\) is not sent to a projective. Then we show that \(\Delta\) is sent to an AR-sequence in \(\mathcal{A}\). 

Before proceeding we need to define two new notions. We call a category \(\mathcal{T}\) locally finite if for each indecomposable object \(X\) of \(\mathcal{T}\) there are only finitely many isomorphism classes of indecomposable objects \(Y\) such that \(\Hom_{\mathcal{T}}(X,Y)\neq0\). A Serre functor \(\mathbb{S}\) is an autoequivalence \(\mathbb{S}:\mathcal{T}\rightarrow\mathcal{T}\) such that for each object \(X\) in \(\mathcal{T}\) there is an isomorphism \[D\Hom_\mathcal{T}(X,-)\cong\Hom_\mathcal{T}(-,\mathbb{S}X)\] 
where \(D\) is the duality \(\Hom_k(-,k)\).

The following theorem is due to \cite{rvdb}.

\begin{theorem}
Let \(\mathcal{T}\) be a Hom-finite Krull-Schmidt \(k\)-category. Then \(\mathcal{T}\) has AR-triangles if and only if \(\mathcal{T}\) has a Serre functor \(\mathbb{S}\).
\end{theorem}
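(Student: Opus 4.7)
The plan is to prove both implications via explicit constructions that tie AR-triangles to Serre duality through the trace form on endomorphism rings of indecomposables, following the template of Reiten--Van den Bergh.

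For the direction \emph{Serre functor implies AR-triangles}, I would fix an indecomposable \(X\). Since \(\mathcal{T}\) is Hom-finite Krull--Schmidt, \(\End_{\mathcal{T}}(X)\) is a finite-dimensional local \(k\)-algebra, so one may choose a nonzero \(k\)-linear form \(\varphi\colon \End_{\mathcal{T}}(X) \to k\) vanishing on \(\rad \End_{\mathcal{T}}(X)\). Set \(\tau X := \Sigma^{-1}\mathbb{S} X\); then Serre duality provides
\[\Hom_{\mathcal{T}}(X, \Sigma\tau X) \;=\; \Hom_{\mathcal{T}}(X, \mathbb{S}X) \;\cong\; D\End_{\mathcal{T}}(X),\]
and \(\varphi\) corresponds under this isomorphism to a distinguished morphism \(h_X\colon X \to \Sigma\tau X\). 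I would complete \(h_X\) to a triangle \(\tau X \to Y \to X \xrightarrow{h_X} \Sigma\tau X\) and verify it is an AR-triangle: non-splitness is immediate from \(h_X \neq 0\); for right almost split, take any non-retraction \(f\colon W \to X\) with \(W\) indecomposable, so that under Serre duality \(h_X \circ f \in \Hom_{\mathcal{T}}(W, \mathbb{S}X) \cong D\Hom_{\mathcal{T}}(X, W)\) corresponds to the functional \(g \mapsto \varphi(f \circ g)\). Since \(f\) is not a retraction, each \(f \circ g\) lies in \(\rad \End_{\mathcal{T}}(X)\), so the functional vanishes, \(h_X \circ f = 0\), and \(f\) factors through \(Y \to X\). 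Left almost split is dual.

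For the converse, I would define \(\mathbb{S}X := \Sigma\tau^{-1}X\) on indecomposables, reading \(\tau X\) and \(\tau^{-1}X\) off the AR-triangles ending at and starting from \(X\), and extend additively. Upgrading \(\mathbb{S}\) to a functor requires lifting morphisms of indecomposables to morphisms of AR-triangles, which is available because AR-triangles are determined up to isomorphism by either endpoint. The candidate Serre duality pairing \(\Hom_{\mathcal{T}}(X,Y) \times \Hom_{\mathcal{T}}(Y, \mathbb{S}X) \to k\) would be built by sending \((f, g)\) to the value of \(\varphi_X\) on the endomorphism of \(X\) obtained by composing \(f\), \(g\), and the connecting morphism of the AR-triangle at \(X\); the almost split property then forces non-degeneracy.

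The main obstacle is the converse direction: checking that the object-level assignment extends to a genuine autoequivalence (functoriality on morphisms) and that the candidate pairing is bifunctorial and non-degenerate for arbitrary, not merely indecomposable, arguments. The first direction, by contrast, reduces to one conceptual observation (the reinterpretation of \(\varphi\) as a morphism \(h_X\)) together with the routine vanishing of \(h_X \circ f\) for non-retractions \(f\).
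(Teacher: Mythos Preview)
The paper does not supply its own proof of this theorem; it is quoted verbatim as a result of Reiten--Van den Bergh and cited as \cite{rvdb}. So there is no ``paper's proof'' to compare against, and your sketch is in fact an outline of the argument from the very reference the paper invokes.

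Your forward direction is essentially correct and is the standard argument. One small point: reducing the right-almost-split check to indecomposable \(W\) is fine in a Krull--Schmidt category, but you should say why (a morphism \(W\to X\) with \(X\) indecomposable is a retraction iff some component from an indecomposable summand of \(W\) is an isomorphism).

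There is a genuine slip in the converse. From your own first paragraph you have \(\tau X=\Sigma^{-1}\mathbb S X\), i.e.\ \(\mathbb S X=\Sigma\tau X\); yet in the converse you set \(\mathbb S X:=\Sigma\tau^{-1}X\). With that sign of \(\tau\) the candidate pairing
\[
\Hom_{\mathcal T}(X,Y)\times\Hom_{\mathcal T}(Y,\mathbb S X)\longrightarrow k
\]
does not match the connecting morphism of the AR-triangle at \(X\) (which lands in \(\Sigma\tau X\), not \(\Sigma\tau^{-1}X\)), and the non-degeneracy argument collapses. The correct definition is \(\mathbb S:=\Sigma\circ\tau\).

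Beyond that, the obstacle you name is the real one: promoting the object-level assignment \(X\mapsto\Sigma\tau X\) to a functor and establishing bifunctorial, non-degenerate pairings is where all the work lies in Reiten--Van den Bergh, and your sketch does not yet indicate how the lift of morphisms is made canonical (AR-triangles are unique only up to non-canonical isomorphism). If you intend to actually prove the converse rather than cite it, you will need to address this coherence issue explicitly.
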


Many triangulated categories have a Serre functor. For example Amiot showed in \cite{amiot} that any locally finite Krull-Schmidt triangulated \(k\)-category has a Serre functor.

We assume in the following that \(\mathcal T\) has AR-triangles, and we proceed to study the AR-structure in the abelian quotient category.
As before we let \(T\in\mathcal T\) be an object such that \(\HomT -:\mathcal T\rightarrow \modf \Gamma\) is full and dense, where \(\Gamma=\End(T)^{op}\).

\begin{lemma}\label{proj0}
Let \[\Delta: \tau X \stackrel{f}{\longrightarrow} Y \stackrel{g}{\longrightarrow} X\stackrel{h}{\longrightarrow} \Sigma \tau X\] be an AR-triangle in \(\mathcal T\). Then  \(\overline h=0\) if and only if \(X\notin\add T\)
\end{lemma}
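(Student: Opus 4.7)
The plan is to unpack what $\overline{h}=0$ means concretely: by definition $\overline{h}=\Hom_{\mathcal T}(T,h)$, so $\overline{h}=0$ if and only if $h\circ u = 0$ for every morphism $u\colon T\to X$. Recall also that $X$, being the third term of an AR-triangle, is indecomposable. Since the AR-triangle does not split, $h\neq 0$, and $g$ enjoys the right almost split property: every $t\colon Z\to X$ that is not a split epimorphism factors through $g$.

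For the implication $X\notin\add T\Rightarrow\overline h=0$, I would decompose $T=\bigoplus_{i=1}^n T_i$ into indecomposables and consider an arbitrary $u\colon T\to X$ with components $u_i\colon T_i\to X$. The point is that no $u_i$ can be a split epimorphism: $T_i$ is indecomposable, so a split epi $T_i\to X$ would force $T_i\cong X$ (as $X$ is indecomposable), contradicting $X\notin\add T$. By the right almost split property of $g$, each $u_i$ factors as $u_i=g\circ u_i'$ for some $u_i'\colon T_i\to Y$. Assembling these, $u$ factors through $g$, so $h\circ u = h\circ g\circ (\ldots)=0$ because $hg=0$ in the triangle. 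Hence $\overline h=0$.

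For the converse $X\in\add T\Rightarrow\overline h\neq 0$, note that $X$ is indecomposable and $T$ is basic, so Krull-Schmidt forces $X$ to be (isomorphic to) one of the indecomposable summands of $T$. Thus there is a split epimorphism $p\colon T\to X$. Then $\overline h(p)=h\circ p$, and if this were zero, composing with a section of $p$ would give $h=0$, contradicting the fact that AR-triangles do not split. Hence $\overline h(p)\neq 0$ and $\overline h\neq 0$.

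The main point requiring care is the first direction: one must verify that each component $u_i\colon T_i\to X$ really fails to be a split epi, which uses both the indecomposability of $X$ (from the AR-triangle) and the characterization of indecomposable objects of $\add T$ as summands of $T$. Once this is in place, the right almost split property of $g$ handles the rest, and nothing deeper than the definition of an AR-triangle is needed.
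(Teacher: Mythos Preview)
Your proof is correct and takes a more elementary route than the paper's. The paper argues via the abelian category $\modf\Gamma$ and the standing assumption that $\HomT{-}$ is full and dense: for $\overline h=0\Rightarrow X\notin\add T$ it observes that $\overline g$ is then an epimorphism onto the projective $\HomT X$, hence splits, and invokes Lemma~\ref{splitindec} (which needs fullness) to lift the splitting to $\mathcal T$, contradicting that $\Delta$ is an AR-triangle; for $X\notin\add T\Rightarrow\overline h=0$ it notes that $\HomT X$ is not projective, chooses a non-split epimorphism onto it, lifts this to $\mathcal T$ by fullness, and then uses almost-splitness of $g$ to conclude that $\overline g$ is an epimorphism. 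Your argument bypasses all of this by working elementwise with $\overline h$ as post-composition by $h$ and using only the right almost split property of $g$ together with Krull--Schmidt. In particular, your proof does not use that $\HomT{-}$ is full or dense, so it actually establishes the lemma under weaker hypotheses than the paper's version. (The appeal to $T$ being basic is harmless but unnecessary: for indecomposable $X$, membership in $\add T$ already forces $X$ to be isomorphic to a summand of $T$ by Krull--Schmidt alone.)
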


\begin{proof}
Assume first that \(\overline h=0\). Then \(\overline g\) is an epimorphism. If \(X\in\add T\), Then \(\HomT X\) is projective, and the epimorphism \(\overline g\) is split. By lemma \ref{splitindec}, \(g\) is also a split epimorphism.
However this leads to \(h=0\) which is a contradiction to \(\Delta\) being an AR-triangle, hence \(X\notin\add T\).

Now assume that \(X\notin\add T\). Then \(\HomT X\) is not projective. If \(X\) is not \(T\)-supported  then clearly \(\overline h=0\). If on the other hand \(X\) is \(T\)-supported then there exists a non-split epimorphism  \[\HomT A\stackrel{\overline u}{\longrightarrow}\HomT X,\] giving rise to a morphism \( A\stackrel{u}{\rightarrow} X\) which is not a split epimorphism. Since \(g\) is an almost split morphism, there exists a morphism \(v:A\rightarrow  Y\) such that \(u=g\circ v\). We have \(\overline u=\overline g\circ \overline v\), where \(\overline u\) is an epimorphism. Hence \(\overline g\) is an epimorphism and so \(\overline h=0\).
\qquad

\end{proof}

\begin{lemma}\label{inj0}
If \(\Sigma^{-1} X\in\add T\) then \(\HomT{\tau X}\) is injective and nonzero.
\end{lemma}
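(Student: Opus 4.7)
The plan is to exploit the Serre duality available in $\mathcal{T}$. Since $\mathcal T$ has AR-triangles, the theorem of Reiten--Van den Bergh cited above provides a Serre functor $\mathbb S$ on $\mathcal T$, and the Auslander--Reiten translate satisfies $\tau = \mathbb S\circ \Sigma^{-1}$. So $\tau X\cong \mathbb S(\Sigma^{-1}X)$, and applying the Serre duality isomorphism $\Hom_{\mathcal T}(T, \mathbb S Z)\cong D\Hom_{\mathcal T}(Z, T)$ at $Z=\Sigma^{-1}X$ yields a natural isomorphism
$$\HomT{\tau X}\;\cong\;D\Hom_{\mathcal T}(\Sigma^{-1}X, T).$$

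The next step is to recognise the right-hand side as an injective object of $\modf \Gamma$. The contravariant functor $\Hom_{\mathcal T}(-,T)$, with $\End_{\mathcal T}(T)$ acting by post-composition, takes values in $\modf \End_{\mathcal T}(T)$. A direct dualisation of the proof of lemma \ref{addproj} shows that it restricts to an equivalence between $\add T$ and the full subcategory of projective $\End_{\mathcal T}(T)$-modules. Composing with the $k$-duality $D=\Hom_k(-,k)$, which carries finitely generated projective $\End_{\mathcal T}(T)$-modules to finitely generated injective $\Gamma$-modules, produces a contravariant equivalence from $\add T$ onto the injectives in $\modf \Gamma$. Since $\Sigma^{-1}X\in\add T$, the module $D\Hom_{\mathcal T}(\Sigma^{-1}X, T)$ is injective in $\modf \Gamma$, and by the displayed isomorphism so is $\HomT{\tau X}$.

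Nonzeroness is then immediate: $X$ is the right-hand term of a nontrivial AR-triangle, hence nonzero, so $\Sigma^{-1}X$ is a nonzero object of $\add T$ and therefore a direct summand of $T^n$ for some $n$; projecting the resulting inclusion $\Sigma^{-1}X\hookrightarrow T^n$ onto a single factor yields a nonzero element of $\Hom_{\mathcal T}(\Sigma^{-1}X, T)$, whence $D\Hom_{\mathcal T}(\Sigma^{-1}X,T)\neq 0$. The only real obstacle I anticipate is bookkeeping of left/right module conventions, so that Serre duality and the passage from projectives over $\End_{\mathcal T}(T)$ to injectives over $\Gamma$ land in compatible module categories; modulo that routine check, the proof is a two-step application of Serre duality plus the (dualised) equivalence $\add T\cong\proj \Gamma$.
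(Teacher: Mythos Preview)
Your proof is correct and follows essentially the same route as the paper: both compute \(\HomT{\tau X}\cong\HomT{\mathbb S\Sigma^{-1}X}\cong D\Hom_{\mathcal T}(\Sigma^{-1}X,T)\) via Serre duality, and then use the (dual of the) equivalence \(\add T\cong\proj\Gamma\) to identify the right-hand side as the \(k\)-dual of a projective \(\Gamma\op\)-module, hence an injective \(\Gamma\)-module. Your treatment of nonzeroness is in fact more explicit than the paper's, which simply leaves it implicit that \(\Sigma^{-1}X\) is a nonzero object of \(\add T\).
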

\begin{proof}
We have 
\[\HomT{\tau X}\cong\HomT{\mathbb S \Sigma^{-1} X}\cong D\Hom_\mathcal T(\Sigma^{-1} X,T).\]
Hence if \(\Sigma^{-1} X\in\add T\) then \(\Hom_\mathcal T(\Sigma^{-1} X,T)\in\proj \Gamma\op\). It follows that \(D\Hom_\mathcal T(\Sigma^{-1} X,T)\) is injective in \(\modf \Gamma\).
\end{proof}

\begin{lemma}\label{AR-triangle}
Let \(\Delta\) be the AR-triangle 
\[\Delta: \tau X \stackrel{f}{\longrightarrow} Y \stackrel{g}{\longrightarrow} X\stackrel{h}{\longrightarrow} \Sigma \tau X.\]
Assume that \(X\) and \(\tau X\) are both \(T\)-supported, with \(\Sigma^{-1} X\notin \add T\), and \(X\notin \add T\). Then the functor \(\HomT -\) takes the AR-triangle \(\Delta\) to the following AR-sequence in \(\modf \Gamma\):
\begin{equation}\label{ses}
0\rightarrow \HomT{\tau X} \stackrel{\overline f}{\longrightarrow} \HomT Y \stackrel{\overline g}{\longrightarrow}\HomT X\rightarrow 0
\end{equation}
\end{lemma}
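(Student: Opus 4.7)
The plan is to verify the four defining conditions of an almost split sequence in $\modf \Gamma$ for the image of $\Delta$: exactness, indecomposability of the end terms, non-splitness, and the right-almost-split property of $\overline g$. I would attack these roughly in that order.

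For exactness, applying the cohomological functor $\HomT -$ to $\Delta$ makes $\overline h = 0$ a direct consequence of Lemma \ref{proj0} together with $X \notin \add T$, and therefore $\overline g$ is an epimorphism. The main obstacle is showing $\overline f$ is a monomorphism. Here I would rotate $\Delta$ to the triangle
\[
\Sigma^{-1}X \xrightarrow{-\Sigma^{-1}h} \tau X \xrightarrow{f} Y \xrightarrow{g} X,
\]
which reduces the problem to showing that the connecting map $\overline{-\Sigma^{-1}h}$ vanishes. The key input will be the hypothesis $\Sigma^{-1}X \notin \add T$: for any $\phi \colon T \to \Sigma^{-1}X$, the shift $\Sigma\phi \colon \Sigma T \to X$ cannot be a split epimorphism (otherwise $X$ would be an indecomposable summand of $\Sigma T$, placing $\Sigma^{-1}X$ in $\add T$). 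The right-almost-split property of $g$ in $\Delta$ would then factor $\Sigma\phi = g\psi$, giving $h \circ \Sigma \phi = hg\psi = 0$. Since $h \circ \Sigma \phi = -\Sigma\bigl((-\Sigma^{-1}h)\circ\phi\bigr)$ and $\Sigma$ is an autoequivalence, this forces $(-\Sigma^{-1}h)\circ\phi = 0$. As every element of $\HomT{\Sigma^{-1}X}$ is some such $\phi$, we would conclude $\overline{-\Sigma^{-1}h} = 0$ and $\overline f$ is injective.

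With exactness in hand, indecomposability of $\HomT{\tau X}$ and $\HomT X$ in $\modf \Gamma$ follows because both are nonzero by $T$-supportedness and their endomorphism rings are quotients of the local rings $\End_{\mathcal T}(\tau X)$ and $\End_{\mathcal T}(X)$, hence themselves local. For non-splitness, I would assume a splitting $\overline s$ of $\overline g$, lift it to $s\colon X \to Y$ by fullness, observe $\overline{gs} = 1_{\HomT X} \neq 0$, and use locality of $\End_{\mathcal T}(X)$ to force $gs$ to be an isomorphism; this would make $g$ a split epimorphism, contradicting the AR-property of $\Delta$.

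For the right-almost-split property of $\overline g$, I would take an arbitrary $\overline u \colon N \to \HomT X$ that is not a split epimorphism. Density of $\HomT -$ lets me write $N = \HomT M$, and fullness lifts $\overline u$ to a morphism $u \colon M \to X$ in $\mathcal T$. A splitting of $u$ would descend to a splitting of $\overline u$, so $u$ cannot be a split epimorphism, and the right-almost-split property of $g$ in $\Delta$ yields a factorisation $u = gv$. Applying $\HomT -$ then gives $\overline u = \overline g\,\overline v$, completing the verification.
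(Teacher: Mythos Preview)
Your proposal is correct and follows essentially the same route as the paper: establish the short exact sequence, note indecomposability of the end terms, and verify that $\overline g$ is right almost split by lifting via fullness and density and invoking the almost-split property of $g$.

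The one place you differ is left exactness. The paper handles this in a single sentence by citing Lemma~\ref{proj0}; the implicit point is that $\Sigma^{-1}\Delta$ is again an AR-triangle (since $\tau$ commutes with $\Sigma$), so Lemma~\ref{proj0} applied to $\Sigma^{-1}\Delta$ together with the hypothesis $\Sigma^{-1}X \notin \add T$ gives $\overline{\Sigma^{-1}h}=0$ directly. Your argument instead proves this by hand: you shift a test map $\phi:T\to\Sigma^{-1}X$, observe $\Sigma\phi$ cannot be split epi, and factor it through $g$. This is exactly the proof of Lemma~\ref{proj0} specialized to the shifted triangle, so the content is the same---your version is just more self-contained, while the paper's is terser. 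For non-splitness and the lifting of $\overline u$, the paper appeals to Lemma~\ref{splitindec}, whereas you use only the trivial direction (a splitting of $u$ descends to one of $\overline u$), which is all that is needed here.
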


\begin{proof}
From lemma \ref{proj0} it is clear that \(\HomT -\) takes \(\Delta\) to the short exact sequence (\ref{ses}). Note that \(\HomT{\tau X}\) and \(\HomT X\) are indecomposable since \(X\) and \(\tau X\) are indecomposable. Since \(\HomT{\tau X}\) is indecomposable it its enough to show that \(\overline g\) is right almost split \cite[thm V.1.14]{ARS}.

If \(\overline g\) is a split epimorphism then by lemma \ref{splitindec} we have that \(g\) is a split epimorphism, which contradicts the fact that \(\Delta\) is an AR-triangle. Hence \(\overline g\) is not a split epimorphism.

Assume that \( u: W\rightarrow X\) is a morphism such that \(\overline u\) is not a split epimorphism. By Lemma \ref{splitindec}, \(u\) is not a split epimorphism.

Since \(u\) is not a split epimorphism and \(g\) is right almost split, there is a morphism \(v:A\rightarrow Y\) such that \(u=g\circ v\). Applying \(\HomT -\) to this we obtain exactly what we want, which is a morphism \(\overline v:\HomT A\rightarrow\HomT Y\) such that \(\overline u=\overline g\circ\overline v\).
\end{proof}

\section{Cluster-tilting objects and the 2-Calabi-Yau case}
In this section we will work under the additional assumption that \(\mathcal T\) is a 2-Calabi-Yau category.  
We give two notable results. 
First we show for which objects \(T\) applying the functor \(\HomT{-}\) coincides with the cluster-tilting case studied in \cite{k-zhu}. 
Then we apply this result, to show that in many finite categories the only possible way to obtain an abelian quotient is with the previously known method from \cite{k-zhu}. 

We start by defining a cluster-tilting object.
\begin{definition}\label{cto}
An object \(T\) in a triangulated category \(\mathcal T\) is called a \emph{cluster-tilting object} if \[\add(T)=\{X|\HomT {\Sigma X}=0\}=\{X|\Hom_\mathcal T(X,\Sigma T)=0\}.\]
\end{definition}

Cluster-tilting objects turn out to be very closely related to the objects where \(T\) is such that \(\HomT{-}\) is a full and dense functor. In section \ref{FullDense} we showed that \(\HomT -\) is full and dense if and only if condition \textbf a and \textbf b were satisfied. We consider the following, stronger, version of condition \textbf b:
\begin{description}
\item[b*] For all indecomposable objects \(X\) there exists a triangle \[T_1\rightarrow T_0\rightarrow X\xrightarrow h \Sigma T_1\] with \(T_1,T_0\in \add T\) and \(\HomT h=0\).
\end{description}
The difference from \textbf b is that we require existence of such a triangle not only for objects \(X\) that are \(T\)-supported, but for all objects.

\begin{theorem}\label{abc}
An object \(T\) in \(\mathcal T\) is a cluster-tilting object if and only if \textbf a from Theorem \ref{abfulldense} and \textbf{b*}  are satisfied and furthermore: 
\begin{description}
\item[c] if \(T'\) is an indecomposable summand of \(T\), then \(\Sigma T'\notin\add T\)
\end{description}
\end{theorem}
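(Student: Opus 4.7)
The plan is to establish the two implications separately and to exploit the $2$-Calabi--Yau duality $\HomT{\Sigma X}\cong D\Hom_\mathcal{T}(X,\Sigma T)$ so that, for the two equivalent forms of the cluster-tilting condition in Definition~\ref{cto}, only one of the inclusions really needs to be checked.

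The forward direction is routine. Condition \textbf{c} is immediate: $\Sigma T'\in\add T$ would force $\Hom_\mathcal{T}(\Sigma T',\Sigma T)=0$ by cluster-tilting, contradicting the presence of $1_{T'}$ in $\Hom_\mathcal{T}(T',T)\cong\Hom_\mathcal{T}(\Sigma T',\Sigma T)$. Condition \textbf{a} is automatic, since cluster-tilting gives $\HomT{\Sigma T_1}=0$ whenever $T_1\in\add T$, so $\overline h$ lands in $0$. For \textbf{b*}, I would take a right $\add T$-approximation $T_0\to X$, complete it to a triangle $T_1\to T_0\to X\to \Sigma T_1$, and use the long exact sequence together with the approximation property and the vanishing $\HomT{\Sigma T_0}=0$ to deduce $\HomT{\Sigma T_1}=0$; this simultaneously yields $T_1\in\add T$ and $\overline h=0$.

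The converse is the substantive direction. The key step is to prove rigidity, namely $\HomT{\Sigma T'}=0$ for every indecomposable summand $T'$ of $T$. I would apply \textbf{b*} to the indecomposable object $\Sigma T'$, yielding $T_1\xrightarrow{f} T_0\to\Sigma T'\xrightarrow{h}\Sigma T_1$ with $T_i\in\add T$ and $\overline h=0$. In the Krull--Schmidt setting I decompose $T_1=T_1^r\oplus T_1^0$ with $f|_{T_1^r}$ right minimal and $f|_{T_1^0}=0$; the cone of $f$ then splits as the cone of $f|_{T_1^r}$ direct sum $\Sigma T_1^0$, and indecomposability of $\Sigma T'$ forces one of these summands to vanish. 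If $\Sigma T'\cong\Sigma T_1^0$ the original triangle splits off a direct summand on which $h$ acts as the identity, so $\overline h=0$ directly forces $\HomT{\Sigma T'}=0$. Otherwise $f$ itself is right minimal; I rotate to obtain $T'\xrightarrow{a}T_1\xrightarrow{f}T_0\xrightarrow{g}\Sigma T'$ with $a=-\Sigma^{-1}h$. If $a=0$ then $h=0$, the triangle splits, and $\Sigma T'$ becomes a direct summand of $T_0\in\add T$, which contradicts \textbf{c}. Hence $a\neq 0$, and since $T'$ is indecomposable $a$ must be right minimal; applying \textbf{a} to $a$ then yields $\overline g=0$. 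Combined with the fact that $\overline h=0$ in the long exact sequence of the original triangle makes $\overline g$ surjective onto $\HomT{\Sigma T'}$, this forces $\HomT{\Sigma T'}=0$.

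With rigidity established, I would prove $\{X\mid \HomT{\Sigma X}=0\}\subseteq\add T$ as follows. For an indecomposable $X$ in this set, apply \textbf{b*} to $\Sigma X$ to obtain a triangle $T_1'\xrightarrow{f'}T_0'\to\Sigma X\to\Sigma T_1'$ with $T_i'\in\add T$. The long exact sequence, using $\HomT{\Sigma X}=0$, forces $\bar{f'}$ to be surjective; since $\HomT{T_0'}$ is projective in $\modf \Gamma$ this surjection splits, and Lemma~\ref{addproj} lifts the splitting back to $\mathcal{T}$. Thus $T_1'\cong T_0'\oplus K$, and the consequent decomposition of the triangle gives $X\cong K\in\add T$. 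The reverse inclusion is the rigidity statement, and the $2$-Calabi--Yau duality identifies the two subsets appearing in Definition~\ref{cto}. The main obstacle throughout is the rigidity step, where the right-minimization dichotomy requires careful bookkeeping of the Krull--Schmidt decomposition of $T_1$; condition \textbf{c} enters precisely to rule out the degenerate case $a=0$ so that \textbf{a} can be brought to bear on the rotated triangle.
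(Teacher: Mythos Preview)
Your argument is correct and follows the paper's proof closely. Two remarks. First, the decomposition $T_1=T_1^r\oplus T_1^0$ in the rigidity step is superfluous: the paper simply notes that $h\neq 0$ (otherwise the \textbf{b*}-triangle splits and $\Sigma T'\in\add T_0\subseteq\add T$, violating \textbf{c}), so $\Sigma^{-1}h:T'\to T_1$ is a nonzero morphism out of an indecomposable object and hence automatically right minimal; \textbf{a} then applies directly, and your Case~1 is subsumed. Second, the paper does not invoke the $2$-Calabi--Yau duality for the second equality in Definition~\ref{cto} but argues symmetrically (if $\Hom_\mathcal{T}(X,\Sigma T)=0$ then the connecting map $X\to\Sigma T_1$ in the \textbf{b*}-triangle for $X$ vanishes, so $X$ is a summand of $T_0\in\add T$); thus Theorem~\ref{abc} is in fact independent of the $2$-CY hypothesis, whereas your shortcut, while valid under the section's standing assumption, forfeits this extra generality.
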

\begin{proof}
Suppose \textbf a, \textbf{b*} and \textbf c holds, we need to show that \(T\) is cluster-tilting. 

Assume \(T'\in\add T\) is indecomposable. We need to show that \(\Sigma T'\) is not \(T\)-supported, i.e \(\HomT{\Sigma T'}=0\). By \textbf{b*} a distinguished triangle \[T_1\rightarrow T_0\rightarrow \Sigma T'\xrightarrow f \Sigma T_1\] exists,
 where \(T_1, T_0\in\add T\) and \(\overline f=0\). We have that \(f\neq 0\) by \textbf c.
Since \(T'\) is indecomposable, \(\Sigma^{-1} f:T'\rightarrow T_1\) is right minimal. By rotating the above triangle, we get the distinguished triangle \[T'\xrightarrow {\Sigma^{-1} f} T_1\rightarrow T_0\xrightarrow{g} \Sigma T'\]
 where \(\overline g=0\) by \textbf a. This means that the following sequence is exact: \[\HomT {T_0}\xrightarrow{\overline g=0}\HomT {\Sigma T'}\xrightarrow{\overline f=0}\HomT{ \Sigma T_1}.\]
Consequently \(\HomT{\Sigma T'}=0\).

Conversely, suppose \(X\) is such that \(\Hom_\mathcal T(T,\Sigma X)=0\). We need to show that \(X\) is in \(\add T\). By \textbf{b*} there exists \(T_0,T_1\in\add T\) such that following triangle is distinguished:
\[T_1\rightarrow T_0\xrightarrow{0} \Sigma X\rightarrow \Sigma T_1.\]
The zero follows from the assumption on \(X\).

Using the axioms for triangulated categories we see that the following triangle is also distinguished:
\[X\rightarrow T_1\rightarrow T_0\xrightarrow{0} \Sigma X.\]
By \cite{neeman}, this is a split triangle; thus \(T_1\cong X\oplus T_0\) and \(X\in \add T\).

We now have shown that \(\add T=\{X|\Hom_\mathcal T(T,\Sigma X)=0\}\); the other equality in definition \ref{cto} can be shown similarly. Thus \(T\) must be cluster-tilting.
Suppose now instead that \(T\) is a cluster-tilting object; we show \textbf a, \textbf{b*} and \textbf c in order.

Let \(T_1\rightarrow T_0\) be a right minimal morphism between objects in \(\add T\), and complete this morphism to a triangle:
\[T_1\rightarrow T_0\rightarrow Y\xrightarrow g \Sigma T_1.\]
Since \(\HomT{\Sigma T_1}=0\), we must also have \(\overline g=0\), and \textbf a holds.

Note that \(\add T\) is a functorially finite subcategory; this is well known and not dependent on \(T\) being a cluster-tilting object. In particular \(\add T\) is contravariantly finite.
We now follow \cite[thm 3.2]{k-zhu} to show that condition \textbf{b*} holds. 

Let \(X\) be an arbitrary object of \(\mathcal T\). Let \(f:T_0\rightarrow X\) be a right \(\add T\)-approximation of \(X\). We complete this to the triangle 
\[Y\rightarrow T_0\xrightarrow f X\xrightarrow g \Sigma Y.\]
Applying \(\HomT -\) we get the long exact sequence
\[\cdots\rightarrow\! \HomT{T_0}\!\xrightarrow{\overline f}\!\HomT X
\!\xrightarrow{\overline g}\!\HomT{\Sigma Y}\!\rightarrow\!\HomT{\Sigma T_0}\!\rightarrow\cdots
\]
We have \(\HomT{\Sigma Y}=0\), since \(\overline f\) is surjective and \(\HomT{\Sigma T_0}=0\). Consequently, \(Y\in \add T\), and condition \textbf{b*} holds.

To show condition \textbf c, assume that \(T'\) is an indecomposable summand of \(T\) with \(\Sigma T'\in\add T\). As \(\HomT{\Sigma T}=0\), we must have \(\HomT{\Sigma T'}=0\). Since \(\Sigma T'\in\add T\), this means that \(\Sigma T'=0\). Hence \(T'=0\).
\end{proof}

A triangulated category \(\mathcal{T}\) with a Serre functor \(\mathbb{S}\) is said to be \(d\)-Calabi-Yau if \(\mathbb{S}=\Sigma^d\), and \(d\) is the smallest positive integer for which this holds. In particular 2-Calabi-Yau categories have been studied quite extensively. Examples of such categories include the classical cluster categories, \(D^b(H)/\tau^-\Sigma\) where \(H\) is an hereditary algebra \cite{bmrrt}. 

We will show that in finite 2-Calabi-Yau categories if \(\HomT -\) is full and dense, then in most cases \(T\) must be a cluster-tilting object. To do this we first need to study the structure of finite 2-Calabi-Yau categories.

\begin{lemma}
Let \(\mathcal T\) be a connected 2-Calabi-Yau category with finitely many isomorphism classes of indecomposable objects. Let \(T\in\Ob \mathcal T\) be a non-zero object such that condition \textbf b is satisfied.
Then \[\ind \mathcal T=\{X\in\ind\mathcal T|\HomT X\neq 0\}\cup\{X\in\ind\mathcal T|X\in\add \Sigma T\}.\]
\end{lemma}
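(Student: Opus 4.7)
The plan is to show that $\mathcal U := \{X \in \ind \mathcal T \mid \HomT X \neq 0\} \cup \{X \in \ind \mathcal T \mid X \in \add \Sigma T\}$ exhausts $\ind \mathcal T$. The set $\mathcal U$ is non-empty: for any indecomposable summand $T'$ of $T$ we have $\HomT{T'} \neq 0$, so $T \in \mathcal U$. The goal is therefore to rule out, by contradiction, the existence of an $X \in \ind \mathcal T$ with $\HomT X = 0$ and $X \notin \add \Sigma T$.

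Given such a putative $X$, I would first use connectedness of the AR-quiver of $\mathcal T$ together with the fact that $T$ itself is $T$-supported to produce a $T$-supported indecomposable $Y$ with a non-zero morphism $Y \to X$ (walking along the AR-quiver from a summand of $T$ toward $X$ and selecting the last $T$-supported object whose position admits a non-zero hom into the non-$T$-supported region). Applying condition \textbf b to $Y$ gives a triangle $T_1 \to T_0 \to Y \xrightarrow{h} \Sigma T_1$ with $T_i \in \add T$ and $\overline h = 0$. Apply $\Hom(-,X)$: since $\Hom(T_i,X) = 0$ (as $X$ is not $T$-supported and $T_i \in \add T$), the long exact sequence yields a surjection $\Hom(\Sigma T_1,X) \twoheadrightarrow \Hom(Y,X) \neq 0$. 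Writing $\Hom(\Sigma T_1,X) = \Hom(T_1,\Sigma^{-1}X)$ and using $T_1 \in \add T$, this forces $\HomT{\Sigma^{-1}X} \neq 0$, so $\Sigma^{-1}X$ is $T$-supported. Now apply condition \textbf b to $\Sigma^{-1}X$ to obtain a triangle $T_1' \to T_0' \to \Sigma^{-1}X \xrightarrow{h'} \Sigma T_1'$ with $T_i' \in \add T$ and $\overline{h'} = 0$, and shift by $\Sigma$ to obtain the distinguished triangle $\Sigma T_1' \to \Sigma T_0' \to X \to \Sigma^2 T_1'$, whose first two terms lie in $\add \Sigma T$.

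Applying $\HomT-$ to the unshifted triangle, the conditions $\overline{h'} = 0$ and $\HomT X = 0$ force the induced map $\HomT{\Sigma T_1'} \to \HomT{\Sigma T_0'}$ to be an isomorphism of $\Gamma$-modules. Combined with the 2-Calabi-Yau Serre isomorphism $\HomT{\Sigma T_i'} \cong D\Hom(T_i',\Sigma T)$ and the equivalence $\add T \cong \proj \Gamma$, one deduces that the shifted triangle splits in a way realising $X$ as an indecomposable summand of either $\Sigma T_0' \in \add \Sigma T$ or $\Sigma^2 T_1' \in \add \Sigma^2 T$. The first case directly contradicts $X \notin \add \Sigma T$. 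In the second, $X = \Sigma^2 T''$ for an indecomposable summand $T''$ of $T$, and Serre duality gives $\HomT X \cong D\Hom(T'',T) \neq 0$ (non-vanishing since $T''$ embeds as a summand of $T$), contradicting that $X$ is not $T$-supported. The main obstacle is turning the $\Gamma$-module isomorphism into the splitting of the shifted triangle: one must carefully translate between the morphism $\Sigma T_1' \to \Sigma T_0'$ in $\add \Sigma T$ (where $\HomT-$ is not an equivalence) and the underlying morphism $T_1' \to T_0'$ in $\add T$ (where it is), and combine this with the Krull-Schmidt indecomposability of $X$ to extract the desired splitting.
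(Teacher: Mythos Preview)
Your proposal has the right shape, but two real gaps remain.

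The decisive one is the step you yourself flag as the ``main obstacle'', and it is not a mere technicality. From the fact that $\overline{\Sigma f'}\colon \HomT{\Sigma T_1'} \to \HomT{\Sigma T_0'}$ is an isomorphism you cannot conclude that the shifted triangle splits, nor that $X$ is a direct summand of $\Sigma T_0'$ or of $\Sigma^2 T_1'$. The objects $\Sigma T_i'$ lie in $\add \Sigma T$, not in $\add T$, so the equivalence $\add T \cong \proj \Gamma$ gives no grip on $\Sigma f'$; dualising via Serre only tells you that $\Hom_{\mathcal T}(f',\Sigma T)$ is an isomorphism, which is again a statement about maps into $\Sigma T \notin \add T$ and says nothing about $f'$ itself. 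The paper bypasses this completely. Once the shifted triangle
\[
\Sigma T_1' \longrightarrow \Sigma T_0' \longrightarrow X \xrightarrow{\ h''\ } \Sigma^2 T_1'
\]
is in hand, it simply cases on the connecting morphism $h''$. If $h'' = 0$ the triangle splits and the indecomposable $X$ is a summand of $\Sigma T_0' \in \add \Sigma T$; if $h'' \neq 0$ then $\Hom_{\mathcal T}(X,\Sigma^2 T) \neq 0$, and since $\Sigma^2 = \mathbb S$ in the 2-Calabi-Yau case, Serre duality gives $\HomT{X} \cong D\Hom_{\mathcal T}(X,\Sigma^2 T) \neq 0$. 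This is exactly the Serre-duality computation you carry out in your ``second case'', but applied to the map $h''$ itself rather than to a summand inclusion that need not exist.

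The second gap is the production of a $T$-supported $Y$ with $\Hom_{\mathcal T}(Y,X) \neq 0$. Connectedness of the AR-quiver only gives a zig-zag of irreducible maps between a summand of $T$ and $X$; it does not yield a non-zero composite in the required direction. The paper does not argue by contradiction here: it proves the closure statement ``if $Z \in \mathcal U$ and $\Hom_{\mathcal T}(Z,W) \neq 0$ then $W \in \mathcal U$'' (your argument from $Y$ onward is essentially this closure step in the subcase where the composite $T_0 \to Y \to W$ vanishes), and then invokes Amiot's structure theorem for finite triangulated categories---the AR-quiver has the form $\mathbb Z\Delta/G$ and the category is standard---to conclude that a non-empty successor-closed subset exhausts $\ind\mathcal T$.
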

\begin{proof}

Let \(\mathcal D=\{X\in\ind\mathcal T|\HomT X\neq 0\}\cup\{X\in\ind\mathcal T|X\in\add \Sigma T\}\).
Note that \(\mathcal D\) is non-empty, as all indecomposable summands of \(T\) are in \(\mathcal D\).

We will show that for any object \(X\in\mathcal D\) we can find a triangle \[T_1\rightarrow T_0\rightarrow X\xrightarrow{h} \Sigma T_1\] such that \(T_1, T_0\in \add T\) and \(\HomT h=0\). If \(X\) is \(T\)-supported, this follows immediately from condition \textbf b. If not, then \(X\in \add\Sigma T\), so \(\Sigma^{-1}X\in\add T\). The following split triangle fulfills the conditions.
\[\Sigma^{-1}X \rightarrow 0\rightarrow X\rightarrow X\]

Assume that \(X\in \mathcal D\) and \(Y\in\ind \mathcal T\). Let \(f:X\rightarrow Y\) be a non-zero morphism. We will show that \(Y\in\mathcal D\). By the above we can form the following diagram:
\begin{center}
\begin{tikzpicture}[xscale=1.5]
\node(T1) at (0,1) {\(T_1\)};
\node(T0) at (1,1) {\(T_0\)};
\node(X)  at (2,1) {\(X\)};
\node(ST) at (3,1) {\(\Sigma T_1\)};
\node(Y) at (2,0) {\(Y\)};
\draw[->] (T1) -- node[anchor=south]{}(T0);
\draw[->] (T0) -- node[anchor=south]{\(g\)}(X);
\draw[->] (X) -- node[anchor=east]{\(f\)}(Y);
\draw[->] (X) -- node[anchor=south]{\(h\)}(ST);
\end{tikzpicture}
\end{center}
If \(gf\neq0\), then \(Y\) is \(T\)-supported. Hence \(Y\in\mathcal D\), and we are done.

If \(gf=0\), then by the weak cokernel property of triangulated categories, there exists a morphism \(f':\Sigma T_1\rightarrow Y\) such that \(f'h=f\). Hence \(\Sigma^ {-1}Y\) is \(T\)-supported. By condition \textbf b we can form a distinguished triangle \[T_1'\rightarrow T_0'\rightarrow \Sigma^{-1}Y\rightarrow \Sigma T_1',\]
with \(T'_1, T_0\in\add T\). Hence the triangle
\[\Sigma T_1'\rightarrow\Sigma T_0'\rightarrow Y\xrightarrow{h'} \Sigma^2 T_1'\]
is distinguished.
If \(h'=0\), then the triangle splits, and \(Y\) is a summand of \(\Sigma T_0'\). Hence \(Y\in\add \Sigma T\).

If \(h'\neq 0\), then
\[0\neq\Hom_\mathcal T(Y,\Sigma^2 T)=\Hom_\mathcal T(Y,\mathbb S T)\cong D\Hom_\mathcal T(T,Y).\]
Hence \(Y\) is \(T\)-supported.

In \cite{amiot}, the author shows that any connected triangulated category with finitely many indecomposables has an AR-quiver of the form \(\mathbb Z\Delta/G\), where \(\Delta\) is a Dynkin diagram and \(G\) is a group of weakly admissible automorphisms of \(\mathbb Z\Delta\). By corollary 6.3.3 in \cite{amiot} \( \mathcal T\) is an orbit category. Since, by the above, any \(\tau\)-orbit of the AR-quiver of \(\mathcal T\) contains an element of \(\mathcal D\), we see that \(\ind\mathcal T=\mathcal D\).
\end{proof}
A consequence of this lemma is that for the connected 2-Calabi-Yau case \textbf{b*} is implied by \textbf b. We are now ready to prove the final theorem. Recall that an object \(X\) in a triangulated category \(\mathcal T\) such that \(\End_{\mathcal T}(X)^{\op}\cong k\) is called a Schurian object.

\begin{theorem}
Let \(\mathcal T\) be a 2-CY connected triangulated category with finitely many isomorphism classes of indecomposable objects. If \(T\) is an object in \(\mathcal T\) such that \(\HomT -:\mathcal T\rightarrow \modf\Gamma\) is full and dense, then \(T\) is either Schurian or \(T\) a 2-cluster-tilting object in \(\mathcal T\).
\end{theorem}

\begin{proof}
Condition \textbf a is satisfied, since \(\HomT -\) is full and dense. By the above, so is condition \textbf{b*}. If \(T\) satisfies \textbf{c}, then by lemma \ref{abc} \(T\) is a cluster-tilting object.

Assume that \(T\) does not satisfy condition \textbf c. We will show that \(\modf \Gamma=\modf k\). 

Let \(T'\) be an indecomposable summand of \(T\) such that \(T',\Sigma T'\in\add T\). Since \(\mathcal T\) is a 2-CY triangulated category, we have \(\Sigma T'\cong\tau T'\). Therefore we have the following AR-triangle
\[\Delta:\tau T' \xrightarrow{f}E\xrightarrow g T'\xrightarrow h \Sigma\tau T'.\]
Applying \(\HomT -\) to the above AR-triangle yields the following long exact sequence: 
\[ \ldots\rightarrow \Hom_{\mathcal T}(T,\tau T')\xrightarrow{\overline{f}}\Hom_{\mathcal T}(T,E)\xrightarrow{\overline g}\Hom_{\mathcal T}(T,T')\rightarrow\ldots\] where by the proof of lemma \ref{AR-triangle} the map \(\overline g\) is right almost split. 

Since \(T'\) in \(\add T\), we have that \(\HomT{T'}\) is projective. Hence there exists a right almost split monomorphism \(r:\rad_{\Gamma}\Hom(T,T')\rightarrow\HomT{T'}\).
Since  \(\overline g\) and \(r\) are both right almost split, there are morphisms \[a:\rad_{\Gamma}(T,T')\rightarrow \Hom_{\mathcal T}(T,E) \textnormal{ and }a':\Hom_{\mathcal T}(T,E)\rightarrow \rad_{\Gamma}(T,T')\] such that \(\overline g a=r\) and \(ra'=\overline g\). Hence
\[r a' a=\overline g a=r.\]
Since \(r\) is a monomorphism, \(a'a=1_{\rad_{\Gamma}(T,T')}\). Thus \(\rad_{\Gamma}(T,T')\) must be a direct summand of \(\HomT E\). We rewrite in terms of this direct summand: \(\HomT E=\rad_{\Gamma}(T,T')\oplus \Hom_{\mathcal T}(T,U)\) for some object \(U\) in \(\mathcal T\). We rewrite the morphism \(\overline g=\sm{r&  u}\)

We have
\[\sm{r&  u}= \sm{r&  u}\circ a \circ a'=\sm{r& 0 }.\]
Hence \(u=0\).

Let \(R\in\! \mathcal T\) be the preimage of \(\rad_{\Gamma}(T,T')\), i.e.\ \(\Hom_{\mathcal T}(T,\!R)=\rad_{\Gamma}(T,T')\). Let \(g'\) be the preimage of \(r\), so that \(r=\overline{g'}\). Since \(\Hom_{\mathcal T}(T,R)\) is a summand of \(\Hom_{\mathcal T}(T,E)\) it is clear that \(R\) is a summand of \(E\), that is \(E=R\oplus V\) where \(\Hom_{\mathcal T}(T,V)=\Hom_{\mathcal T}(T,U)\). Hence \(\Delta\) can be written as 
\[ \tau T'\xrightarrow{\sm{f_R \\ f_V}}R\oplus V\xrightarrow{\sm{g' & 0}} T'\rightarrow\Sigma\tau T' \]
which, by applying \(\HomT -\), is sent to the long exact sequence
\[\cdots \rightarrow\HomT {\tau T'}\xrightarrow{\sm{\overline{f_R}\\\overline{f_V}}} \begin{matrix}\HomT R\\ \oplus\\ \HomT V\end{matrix}\xrightarrow{\sm{\overline{g'}& 0}}\HomT{T'}\rightarrow \cdots\]

We know that \(\overline{g'}\) is a monomorphism. Exactness of the sequence gives \(\overline{g'}\circ\Hom_{\mathcal T}(T,f_R)=0\) and hence \(\Hom_{\mathcal T}(T,f_R)=0\). Due to lemma \ref{fromTenough} we also have \(f_R=0\). 

We have 
\[\bm{0&0\\0& 1_V}\bm{0\\f_V}=\bm{0\\f_V}.\]
Since \(f=\sm{0\\f_V}\) is left minimal, \(\sm{0&0\\0& 1_V}\) must be an automorphism on \(R\oplus V\), so \(R=0\). Then \(\rad\HomT{T'}=\HomT R=0\), and \(\HomT{T'}\) is a simple projective.

Consider \(\Delta\) under \(\HomT -\):
\[\HomT{\Sigma^{-1} T'}\rightarrow\HomT{\tau T}\rightarrow\HomT{E}\xrightarrow{0}\HomT{T'}\]
The first morphism cannot be zero, as \(\tau T\ncong E\). By Lemma \ref{proj0}, we must have \(\Sigma^{-1} T'\in\add T\). By induction, for any \(n\in\mathbb N\), we have \(\Sigma^{-n}T'\in\add T\). In particular \(\Sigma^{-2}T'=\Sigma^{-1}\tau^- T'\in\add T\). Hence, by Lemma \ref{inj0}, \(\HomT{T'}\) is injective. Thus \(\HomT{T'}\) is simple, projective and injective as a \(\End_\mathcal T(T)^\op\)-module.

We assumed \(\mathcal T\), and thus also \(\modf \Gamma\), to be a connected category. However, if \(\HomT{T'}\) is a simple, projective and injective module, it must be the only indecomposable object in its connected component. It follows that \(\modf\Gamma=\modf k\).
\end{proof}

\bibliographystyle{plain}

\end{document}